\documentclass[11pt,reqno]{amsart}
\usepackage{amsmath, amssymb, amsthm}
\usepackage{graphicx}
\usepackage{xcolor}
\usepackage{url}
\usepackage[breaklinks]{hyperref}

\setlength{\textheight}{220mm} \setlength{\textwidth}{155mm}
\setlength{\oddsidemargin}{1.25mm}
\setlength{\evensidemargin}{1.25mm} \setlength{\topmargin}{0mm}

\parskip .04in

\renewcommand{\Im}{\operatorname{Im}}

\newcommand{\scrF}{\mathcal F}
\newcommand{\scrL}{\mathcal L}
\newcommand{\C}{\mathbb C}
\newcommand{\R}{\mathbb R}

\newcommand{\SL}{\operatorname{SL}}

\renewcommand{\Re}{\operatorname{Re}}
\renewcommand{\Im}{\operatorname{Im}}

\newcommand{\cL}{{\mathcal L}}

\renewcommand{\(}{\left\(}
\renewcommand{\)}{\right\)}
\renewcommand{\[}{\left\[}
\renewcommand{\]}{\right\]}
\numberwithin{equation}{section}
 \theoremstyle{plain}
\newtheorem{theorem}{Theorem}[section]
\newtheorem{lemma}[theorem]{Lemma}

\newtheorem{defn}[theorem]{Definition}
\newtheorem{corollary}[theorem]{Corollary}

\newtheorem{proposition}[theorem]{Proposition}

   \makeatletter
\def\proof{\@ifnextchar[{\@oproof}{\@nproof}}
\def\@oproof[#1][#2]{\trivlist\item[\hskip\labelsep\textit{#2 Proof of\
#1.}~]\ignorespaces}
\def\@nproof{\trivlist\item[\hskip\labelsep\textit{Proof.}~]\ignorespaces}

\makeatother

\title{Summation Formulas for Harmonic Maass Forms}
\date{}
\author{Nikolaos Diamantis}
\address{University of Nottingham}
\email{nikolaos.diamantis@nottingham.ac.uk}
\author{Joshua Pimm}
\address{University of Nottingham}
\email{joshua.pimm@nottingham.ac.uk}
\begin{document}

\begin{abstract}
In a previous work \cite{BDGRT}, a summation formula for harmonic Maass forms of polynomial growth was established. In this note, we use the theory of $L$-series of harmonic Maass forms to state and prove a summation formula for such forms without any restrictions on their growth. We deduce a summation formula for the partition function. We further employ the same theory to derive a result on classical modular forms, namely, a summation formula and the asymptotics of a Riesz sum attached to a cusp form. 
\end{abstract}
\maketitle
\begin{center}

\end{center}


\section{Introduction}
Summation formulas and related analytic techniques have not featured prominently in the theory of harmonic Maass forms, which is surprising given the important role such techniques have played in the theory of classical modular forms and their applications. Part of the reason for this absence was the lack of a systematic theory of $L$-series attached to harmonic Maass forms. However, such a theory is now available (see \cite{DLRR23}) and several foundational aspects of it have been explored. 

An application based on a summation formula was already given in the work \cite{DLRR23} where the $L$-series for harmonic Maass forms was introduced: it represented a contribution to the ``harmonic lift" problem, via a summation formula that sheds light on the behaviour of the “holomorphic part” of a harmonic Maass form with a given ``shadow". More recently, a summation formula for a certain Riesz means \cite{BDGRT} was established, but only for harmonic Maass forms of polynomial growth. That was enough to obtain arithmetic applications for Hurwitz class numbers and for Fourier coefficients of negative half-integral weight Eisenstein series, but the class of harmonic Maass forms of polynomial growth is quite restricted. In this note, we establish a summation formula for harmonic Maass form without any constraints on the growth. This will be stated and proved in general form in Theorem \ref{SF1}, but here we present only a special case to give the general idea. Specifically, let, for some $n_0 \in \mathbb N_0$,
$$f(z)=\sum_{n \ge -n_0} a(n) e^{2 \pi i n z}$$
be a weakly holomorphic modular form of weight $k \in \frac{1}{2}\mathbb N$ for $\Gamma_0(N)$. Let $b(n)$ be the $n$-th Fourier coefficient of $g(z):= (\sqrt{N} \, z)^{-k} f(-1/N\, z)$ and suppose that $a(n), b(n)=O(e^{C_f \sqrt{n}})$, as $n \to \infty$, for some $C_f>0$. We then have the following.
\begin{theorem} For $C>\max(C^2_f\sqrt{N}/(8 \pi ), 2 \pi n_0/\sqrt{N})$ and for all $X>0$, we have
\begin{multline*} \hskip -2mm \sum_{n \ge -n_0} a(n) K_0 \left (2 \sqrt{D_n(C+\sqrt{N} X)} \right )\\=
i^kC^{\frac{k}{2}}\sum_{n \ge -n_0} b(n) \left ( \sqrt{N}X+D_n \right )^{-\frac{k}{2}} K_k \left ( 2 \sqrt{C ( D_n+\sqrt{N}X )} \right)
\end{multline*}
where $D_n:=C+2 \pi n/\sqrt{N}$ and $K_s(z)$ is the modified Bessel function
\end{theorem}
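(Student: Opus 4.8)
The plan is to realise both sides of the claimed identity as a single integral of $f$ along the imaginary axis, transported from one to the other by the Fricke substitution $t\mapsto 1/(Nt)$. The only analytic input is the classical integral representation $\int_0^\infty e^{-at-b/t}\,t^{s-1}\,dt=2(b/a)^{s/2}K_s(2\sqrt{ab})$, valid for $a,b>0$; this is exactly what ties the kernel $e^{-at-b/t}$ to the Bessel functions $K_0$ and $K_k$ occurring on the two sides.

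First I would rewrite the left-hand side. Taking $s=0$, $a=D_n$, $b=C+\sqrt N X$ gives $K_0\bigl(2\sqrt{D_n(C+\sqrt N X)}\bigr)=\tfrac12\int_0^\infty e^{-D_n t-(C+\sqrt N X)/t}\,t^{-1}\,dt$, and since $D_n=C+2\pi n/\sqrt N$ the dependence on $n$ in the exponent splits off as $e^{-Ct}e^{-2\pi nt/\sqrt N}$. Summing over $n$ and interchanging sum and integral, one recognises $\sum_{n\ge -n_0}a(n)e^{-2\pi nt/\sqrt N}=f(it/\sqrt N)$, so, after the substitution $t\mapsto\sqrt N\,t$, the left-hand side becomes $\tfrac12\int_0^\infty f(it)\,e^{-C\sqrt N t-(C/\sqrt N+X)/t}\,t^{-1}\,dt$. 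In this integral I would then perform the involution $t\mapsto 1/(Nt)$, which fixes $t^{-1}\,dt$ and sends the kernel to $e^{-(C\sqrt N+NX)t-(C/\sqrt N)/t}$, and insert the relation $f(i/(Nt))=i^kN^{k/2}t^k\,g(it)$ — this is just the defining equation $g(z)=(\sqrt N z)^{-k}f(-1/(Nz))$ read at $z=it$. The left-hand side thereby becomes $\tfrac{i^kN^{k/2}}{2}\int_0^\infty g(it)\,e^{-(C\sqrt N+NX)t-(C/\sqrt N)/t}\,t^{k-1}\,dt$.

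To finish, I would run the first step in reverse on $g$: expand $g(it)=\sum_{n\ge -n_0}b(n)e^{-2\pi nt}$, interchange sum and integral again, and apply the integral representation with $s=k$, $a=C\sqrt N+NX+2\pi n=\sqrt N(\sqrt N X+D_n)$, $b=C/\sqrt N$, so that $ab=C(D_n+\sqrt N X)$ and $(b/a)^{k/2}=C^{k/2}N^{-k/2}(D_n+\sqrt N X)^{-k/2}$. A short check of the powers of $N$ and of $i$ then collapses the expression to $i^kC^{k/2}\sum_{n\ge -n_0}b(n)(\sqrt N X+D_n)^{-k/2}K_k\bigl(2\sqrt{C(D_n+\sqrt N X)}\bigr)$, the right-hand side. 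I would also point out that this is nothing but the weakly holomorphic specialization of the $L$-series functional equation of Theorem~\ref{SF1}: the integral $\int_0^\infty f(it)\,e^{-C\sqrt N t-(C/\sqrt N+X)/t}\,t^{-1}\,dt$ is the $L$-series of $f$ paired with the test function attached to the kernel $t^{-1}e^{-\alpha t-\beta/t}$, and the substitution-plus-Fricke step is precisely the proof of that functional equation in this instance; in the general harmonic Maass form setting the same manipulation applies once the non-holomorphic part is incorporated through the $L$-series machinery of \cite{DLRR23}.

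The step I expect to require genuine care is the analytic bookkeeping, and this is where the two hypotheses on $C$ enter essentially. The condition $C>2\pi n_0/\sqrt N$ forces $D_n>0$ for every $n\ge -n_0$ (so all Bessel arguments are positive) and, equivalently, makes the kernel $t^{-1}e^{-C\sqrt N t-(C/\sqrt N+X)/t}$ decay fast enough at both $t\to\infty$ and $t\to 0$ to dominate the principal parts of $f$ and of $g$; this is what lets the various integrals be formed at all. The condition $C>C_f^2\sqrt N/(8\pi)$, together with $X>0$, is the exact threshold at which the two interchanges of summation and integration are legitimate: using $K_\nu(x)\asymp x^{-1/2}e^{-x}$ and $a(n),b(n)=O(e^{C_f\sqrt n})$, the $n$-th term of either Bessel series is $\ll\exp\bigl(C_f\sqrt n-2\sqrt{2\pi(C+\sqrt N X)\,n/\sqrt N}\bigr)$, which is summable precisely when $C+\sqrt N X>C_f^2\sqrt N/(8\pi)$. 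Once absolute convergence is secured, Fubini legitimises the interchanges and the chain of identities above goes through verbatim.
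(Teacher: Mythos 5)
Your argument is correct, but it takes a genuinely different route from the paper. You unfold each Bessel series directly into the integral $\tfrac12\int_0^\infty f(it)e^{-C\sqrt N t-(C/\sqrt N+X)/t}\,t^{-1}dt$, apply the substitution $t\mapsto 1/(Nt)$ together with the transformation law $f(i/(Nt))=i^kN^{k/2}t^kg(it)$, and re-expand; in effect you reprove, on the spot and for this particular kernel, the instance of the $L$-series functional equation \eqref{FE} that the paper imports from \cite{DLRR23}. The paper instead obtains the statement as the weakly holomorphic case of Theorem \ref{SF1}, whose proof runs through the general machinery: the Mellin transform of $F(x)=e^{-1/(xY)}$, the contour integral of Proposition \ref{preSF}, the convergence analysis of Lemma \ref{FfFg} for the family $\varphi_s$, and the cited functional equation, before evaluating the same Bessel integrals via \cite[(10.32.10)]{DLMF}. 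What your approach buys is a short, self-contained, purely real-variable proof (two applications of Fubini plus modularity), with no contour shifts and no need to control $L_f(\varphi_s)$ for all complex $s$; what the paper's approach buys is uniformity, since the same framework absorbs the non-holomorphic terms $\Gamma(1-k,4\pi ny)$ and yields the full harmonic Maass form statement of Theorem \ref{SF1}, which your unfolding does not reach without the $L$-series input you yourself flag. One small bookkeeping slip: your claimed common bound $\exp\bigl(C_f\sqrt n-2\sqrt{2\pi(C+\sqrt N X)n/\sqrt N}\bigr)$ is correct for the left-hand series, but for the right-hand series the decay is governed by $\exp\bigl(-2\sqrt{2\pi Cn/\sqrt N}\bigr)$, so its summability requires $C>C_f^2\sqrt N/(8\pi)$ rather than merely $C+\sqrt N X>C_f^2\sqrt N/(8\pi)$; this is exactly the stated hypothesis, so nothing is lost, but the two sides should be bounded separately.
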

An immediate application of this is a summation formula for partition functions, which is stated in Corollary \ref{p(n)}.

A second feature of the summation formula proved in \cite{BDGRT} is that it did not require the full use of the $L$-series defined in \cite{DLRR23}. Since it involved only polynomial growth harmonic Maass forms, the $L$-series attached to the specific type of harmonic Maass forms in \cite{ShanSingh22} sufficed. A natural question then is what kind of summation formulas for Riesz means we derive if we start from the more general $L$-series of \cite{DLRR23} having test functions as arguments. That means exploiting the freedom of selecting a suitable family of test functions, and in this note, we apply this principle to deduce a summation formula, a special case of which is the following:
\begin{theorem}\label{Sect4int} Let $f$ be a holomorphic cusp form of weight $k \in 2\mathbb N$ for $\SL_2(\mathbb Z)$ with Fourier coefficients $c(n)$. Then, for $\rho> 1,$ and each $X>0$ we have
\begin{multline}\label{TH4int}
\sum_{n<X/(2 \pi)}\frac{c(n)e^{-2 \pi n}}{(2\pi n)^{\frac{k-1}{2}}} \left (X-2 \pi n \right )^{\rho}= i^{-k} X^{\rho}L_f\left (\psi_{\frac{k-1}{2}} \right )
\\+\frac{\Gamma(\rho+1)}{2 \pi i^{k+1}\Gamma(k)} \sum_{n>0}\frac{c(n)}{e^{2 \pi n}}
 \int_{(-\varepsilon)}X^{s+\rho}\frac{\Gamma \left (\frac{k+1}{2}-s \right )\Gamma(s)}{\Gamma(s+\rho+1)}M \left (\frac{k-1}{2}+s, k, 2 \pi n \right ) ds
\end{multline}
where $M(a,b,z)$ is Kummer's confluent hypergeometric function.
\end{theorem}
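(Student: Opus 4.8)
The plan is to run a Mellin--Barnes / Riesz-means argument on the $L$-series of \cite{DLRR23} evaluated at the test function $\psi_{\frac{k-1}{2}}$. Two inputs are needed. First, for a cuspidal $f=\sum_{n\ge1}c(n)q^n$ and an admissible test function $\varphi$ one has $L_f(\varphi)=\sum_{n\ge1}c(n)\int_0^\infty\varphi(t)e^{-2\pi nt}\,dt$. Second, the functional equation $L_f(\varphi)=i^{-k}L_f(\widetilde{\varphi})$ with $\widetilde{\varphi}(t):=t^{k-2}\varphi(1/t)$, which is the imaginary-axis form of $f|_kS=f$ (so $f(i/t)=i^kt^kf(it)$; note $i^{-k}=i^k$ since $k$ is even). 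The relevant test function is $\psi_\alpha(t)=\mathbf{1}_{[1,\infty)}(t)\,(t-1)^{\alpha-1}/\Gamma(\alpha)$, normalized so that $\int_0^\infty\psi_\alpha(t)e^{-xt}\,dt=e^{-x}x^{-\alpha}$ and hence $L_f(\psi_\alpha)=\sum_{n\ge1}c(n)e^{-2\pi n}(2\pi n)^{-\alpha}$; it is admissible here because the cuspidality of $f$ forces $f(it)$ to decay exponentially as $t\to\infty$, outweighing the polynomial growth of $(t-1)^{\alpha-1}$.

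Because the left-hand side of \eqref{TH4int} is a finite sum, I can substitute into each of its terms the Riesz-kernel identity $(X-w)^\rho\mathbf{1}_{\{w<X\}}=\frac{\Gamma(\rho+1)}{2\pi i}\int_{(\sigma)}\frac{\Gamma(s)}{\Gamma(s+\rho+1)}X^{s+\rho}w^{-s}\,ds$ ($\sigma>0$) with $w=2\pi n$, and sum the contributions weighted by $c(n)e^{-2\pi n}(2\pi n)^{-(k-1)/2}$, to obtain for $\sigma$ large
\begin{equation*}
\text{LHS}=\frac{\Gamma(\rho+1)}{2\pi i}\int_{(\sigma)}\frac{\Gamma(s)X^{s+\rho}}{\Gamma(s+\rho+1)}\,L_f\!\left(\psi_{\frac{k-1}{2}+s}\right)ds .
\end{equation*}
Now shift the contour to $\Re(s)=-\varepsilon$, $0<\varepsilon<\frac{k-1}{2}$. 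In the strip the only pole is the simple pole of $\Gamma(s)$ at $s=0$ (the factors $\Gamma(s+\rho+1)$ and --- appearing in the next step --- $\Gamma(\frac{k+1}{2}-s)$ are regular there for $k\ge2$), and its residue yields the main term $i^{-k}X^\rho L_f(\psi_{\frac{k-1}{2}})$ of \eqref{TH4int}, leaving an integral over $\Re(s)=-\varepsilon$.

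On $\Re(s)=-\varepsilon$ both $\Re(\frac{k-1}{2}+s)>0$ and $\Re(\frac{k+1}{2}-s)>0$, so the functional equation gives $L_f(\psi_{\frac{k-1}{2}+s})=i^{-k}L_f(\widetilde{\psi_{\frac{k-1}{2}+s}})$. A short computation shows $\widetilde{\psi_\alpha}(t)=\mathbf{1}_{(0,1)}(t)\,t^{\,k-1-\alpha}(1-t)^{\alpha-1}/\Gamma(\alpha)$, hence $L_f(\widetilde{\psi_\alpha})=\frac{1}{\Gamma(\alpha)}\sum_{m\ge1}c(m)\int_0^1 e^{-2\pi mt}t^{\,k-1-\alpha}(1-t)^{\alpha-1}\,dt$; after $t\mapsto1-t$ the inner integral becomes, by Kummer's Euler integral $\int_0^1 e^{zu}u^{a-1}(1-u)^{b-a-1}\,du=\frac{\Gamma(a)\Gamma(b-a)}{\Gamma(b)}M(a,b,z)$ with $a=\alpha,\ b=k$, equal to $e^{-2\pi m}\frac{\Gamma(\alpha)\Gamma(k-\alpha)}{\Gamma(k)}M(\alpha,k,2\pi m)$. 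Putting $\alpha=\frac{k-1}{2}+s$ converts $\Gamma(k-\alpha)$ into $\Gamma(\frac{k+1}{2}-s)$ and produces $M(\frac{k-1}{2}+s,k,2\pi m)$; interchanging $\sum_m$ with $\int_{(-\varepsilon)}$ and collecting the constants ($\Gamma(\rho+1)$, $2\pi$, $\Gamma(k)$ and $i^{-k}/i=i^{-(k+1)}$) turns the leftover integral into exactly the error term of \eqref{TH4int}.

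The point demanding genuine care is the justification of the contour shift and of the interchange of $\sum_m$ with $\int_{(-\varepsilon)}$. One must check that $\psi_{\frac{k-1}{2}+s}$ stays admissible across $-\varepsilon\le\Re(s)\le\sigma$ (again automatic from cuspidality) and that $\sum_{m\ge1}\frac{|c(m)|}{e^{2\pi m}}\int_{(-\varepsilon)}\big|X^{s+\rho}\tfrac{\Gamma(\frac{k+1}{2}-s)\Gamma(s)}{\Gamma(s+\rho+1)}M(\tfrac{k-1}{2}+s,k,2\pi m)\big|\,|ds|<\infty$. Here the factor $e^{-2\pi m}$ is not itself decisive: Kummer's large-argument asymptotic $M(a,b,z)\sim\frac{\Gamma(b)}{\Gamma(a)}e^{z}z^{a-b}$ gives $M(\tfrac{k-1}{2}+s,k,2\pi m)\ll e^{2\pi m}m^{-\frac{k+1}{2}+\Re(s)}$, so once the exponential cancels the $m$-summand is $\ll|c(m)|\,m^{-\frac{k+1}{2}-\varepsilon}$, which is summable by Deligne's bound $|c(m)|\ll m^{(k-1)/2+\varepsilon}$ (the Rankin--Selberg bound and partial summation would also do). The $s$-integral converges by Stirling, the hypothesis $\rho>1$ supplying ample vertical decay through $\Gamma(s)/\Gamma(s+\rho+1)\ll|\Im(s)|^{-\rho-1}$, together with the subexponential growth of $M$ in $\Im(s)$. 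Granting these estimates, Fubini and the residue theorem apply and the rest is the bookkeeping above.
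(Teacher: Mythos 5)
Your route is essentially the paper's own argument run in the opposite direction: the Riesz kernel identity you quote is the same discontinuous integral the paper takes from Erd\'elyi 7.3(20), the contour shift past the simple pole of $\Gamma(s)$ at $s=0$ produces the same main term, and the Euler-integral computation identifying $L_f$ at the $(0,1)$-supported test function with the Kummer series is the paper's \eqref{lserieslhs}. (A notational caution: your $\psi_\alpha$, supported on $[1,\infty)$, is the paper's $\psi_\alpha|_{2-k}w_1$, not the $\psi_\alpha$ appearing in \eqref{TH4int}; the main term still matches because the functional equation \eqref{FE1} converts one into the other and $i^{-k}=i^{k}$ for even $k$.) So the skeleton is correct and coincides with the paper's.

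The genuine gap is in your justification of the final interchange of $\sum_m$ with $\int_{(-\varepsilon)}$, which is precisely where the paper invests its technical work. You invoke the large-$z$ asymptotic $M(a,b,z)\sim \frac{\Gamma(b)}{\Gamma(a)}e^{z}z^{a-b}$, but this is a \emph{fixed-parameter} asymptotic: on the line $s=-\varepsilon+ix$ the first parameter $a=\frac{k-1}{2}-\varepsilon+ix$ moves off to infinity with the integration variable, and the implied constant (already the visible factor $1/\Gamma(a)$) is not uniform in $x$. In fact $1/|\Gamma(\tfrac{k-1}{2}-\varepsilon+ix)|$ grows like $e^{\pi|x|/2}$ times a power, so your claimed bound $M(\tfrac{k-1}{2}+s,k,2\pi m)\ll e^{2\pi m}m^{-\frac{k+1}{2}+\Re(s)}$ with an $x$-independent constant is false, and the complementary remark that $M$ grows subexponentially in $\Im(s)$ is a fixed-$m$ statement whose constants depend on $m$; neither gives the joint bound in $(m,x)$ that Fubini requires. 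The paper supplies exactly this missing ingredient as Lemma \ref{ubound}, proved via the Whittaker--Bessel integral representation and a Laplace-type splitting: $M(a+ix,b,y)\ll_{a,b,\varepsilon} e^{y}y^{a-b+\varepsilon}/|\Gamma(a+ix)|$ uniformly in $x$. Once the $1/|\Gamma(a+ix)|$ factor is restored, it cancels the exponential decay of your Gamma kernel $\Gamma(\tfrac{k+1}{2}-s)\Gamma(s)/\Gamma(s+\rho+1)$, leaving only polynomial decay $(1+|x|)^{2\varepsilon-\rho}$ as in \eqref{|I21|}; thus $\rho>1$ is not ``ample'' slack but exactly the threshold, and the fact that your accounting shows decay of order $|x|^{-\rho-1}$ times an exponential is a symptom of the missing uniformity. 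Without a uniform estimate of this type, the interchange --- and hence the identification of the error term in \eqref{TH4int} --- is not justified.
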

The value $L_f\left (\psi_{\frac{k-1}{2}} \right )$ on the right-hand side of \eqref{TH4int} originates in the functional equation of $L_f(\varphi)$ in the same way as the central critical value of the standard $L$-series of a holomorphic modular form. It would be interesting to see if there is a more intrinsic characterisation of such ``critical values" of $L_f(\varphi)$. 

As an application of Theorem \ref{Sect4int}, we deduce asymptotics for the weighted moment in the LHS of \eqref{TH4int}:
\begin{corollary}
    With the assumptions and notation of Theorem \ref{Sect4int}, we have, for $\rho> 1$ and each $X>0$,
\begin{equation}
\sum_{n<X/(2 \pi)}\frac{c(n)e^{-2 \pi n}}{(2\pi n)^{\frac{k-1}{2}}} \left (1-\frac{2 \pi n}{X} \right )^{\rho}= i^{-k} L\left (\psi_{\frac{k-1}{2}} \right )+O(X^{-\varepsilon}).
\end{equation}
\end{corollary}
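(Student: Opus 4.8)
The plan is to obtain the corollary from Theorem \ref{Sect4int} simply by dividing the identity \eqref{TH4int} through by $X^{\rho}$. Since $(X-2\pi n)^{\rho}=X^{\rho}(1-2\pi n/X)^{\rho}$, the left-hand side of \eqref{TH4int} divided by $X^{\rho}$ is precisely the weighted sum appearing in the corollary; the term $i^{-k}X^{\rho}L_f(\psi_{\frac{k-1}{2}})$ becomes the claimed main term $i^{-k}L(\psi_{\frac{k-1}{2}})$ (this is the value denoted $L_f(\psi_{\frac{k-1}{2}})$ in \eqref{TH4int}); and, writing $X^{s+\rho}=X^{\rho}X^{s}$, the last term of \eqref{TH4int} divided by $X^{\rho}$ becomes
\begin{equation*}
E(X):=\frac{\Gamma(\rho+1)}{2\pi i^{k+1}\Gamma(k)}\sum_{n>0}\frac{c(n)}{e^{2\pi n}}\int_{(-\varepsilon)}X^{s}\,\frac{\Gamma\!\left(\frac{k+1}{2}-s\right)\Gamma(s)}{\Gamma(s+\rho+1)}\,M\!\left(\frac{k-1}{2}+s,\,k,\,2\pi n\right)ds .
\end{equation*}
Thus everything reduces to showing that $E(X)=O(X^{-\varepsilon})$, uniformly in $X>0$.

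On the contour $\Re(s)=-\varepsilon$ we have $|X^{s}|=X^{-\varepsilon}$, which is independent of $\Im(s)$ and of $n$, so this factor pulls straight out of both the integral and the sum:
\begin{equation*}
|E(X)|\le X^{-\varepsilon}\cdot\frac{|\Gamma(\rho+1)|}{2\pi|\Gamma(k)|}\sum_{n>0}\frac{|c(n)|}{e^{2\pi n}}\int_{-\infty}^{\infty}\left|\frac{\Gamma\!\left(\frac{k+1}{2}+\varepsilon-it\right)\Gamma(-\varepsilon+it)}{\Gamma(\rho+1-\varepsilon+it)}\right|\left|M\!\left(\frac{k-1}{2}-\varepsilon+it,\,k,\,2\pi n\right)\right|dt .
\end{equation*}
It therefore suffices to check that the double sum-integral on the right converges to a finite constant $K=K(f,k,\rho,\varepsilon)$; then $E(X)=O(X^{-\varepsilon})$ with an implied constant independent of $X$, and the corollary holds for every $X>0$ at once. (As a sanity check, for $0<X\le 1$ the assertion is in any case trivial: $f$ is cuspidal, so the weighted sum on the left is empty and equals $0$, while $i^{-k}L(\psi_{\frac{k-1}{2}})=O(1)=O(X^{-\varepsilon})$ on that range.)

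The required absolute convergence is, in effect, already part of the content of Theorem \ref{Sect4int}; for completeness I would recover it as follows. By Stirling's formula the quotient of Gamma functions is $\asymp|t|^{k/2-\rho-1+\varepsilon}e^{-\pi|t|/2}$ as $|t|\to\infty$. In the principal range $|t|\lesssim n$ one inserts the large-argument expansion $M(a,b,z)\sim\frac{\Gamma(b)}{\Gamma(a)}e^{z}z^{a-b}+\frac{\Gamma(b)}{\Gamma(b-a)}(-z)^{-a}$ with $z=2\pi n$: the factor $e^{2\pi n}$ cancels the $e^{-2\pi n}$, the resulting sum over $n$ is essentially the Hecke $L$-series $L\!\left(f,\frac{k+1}{2}-s\right)$, which by Deligne's bound $c(n)\ll_{\epsilon}n^{(k-1)/2+\epsilon}$ (taking $\epsilon<\varepsilon$) converges absolutely and is $O(1)$ on that line, leaving an integrand of size $\ll|t|^{2\varepsilon-\rho}$, which is integrable since $\rho>1$ and $\varepsilon$ may be taken smaller than $(\rho-1)/2$. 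In the complementary range $|t|\gtrsim n$ one uses the crude subexponential bound $M(a,b,z)\ll\exp\!\left(C\sqrt{|a|\,z}\right)$ for the confluent hypergeometric function, which is swamped by the $e^{-\pi|t|/2}$ from the Gamma factors and leaves a uniformly convergent tail. I expect the only real friction to be exactly this last point: controlling $M\!\left(\frac{k-1}{2}-\varepsilon+it,\,k,\,2\pi n\right)$ \emph{uniformly} across the two regimes in which the argument is, respectively, large and small relative to the parameter — handled either by the region-splitting above or by a uniform asymptotic expansion of the confluent hypergeometric function. Since this is precisely the estimate already needed to state and prove Theorem \ref{Sect4int}, no genuinely new difficulty arises, and the corollary follows.
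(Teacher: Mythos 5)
Your reduction is exactly the paper's: divide \eqref{TH4int} by $X^{\rho}$, note that $|X^{s}|=X^{-\varepsilon}$ on $\Re(s)=-\varepsilon$, and bound the remaining sum--integral by an $X$-independent constant; indeed the paper's proof of this corollary is a single line, citing the absolute-convergence computation \eqref{|I21|} from the proof of the theorem, which rests on the uniform bound of Lemma \ref{ubound}. The genuine gap is in your proposed self-contained justification of that convergence. The fixed-parameter large-argument expansion $M(a,b,z)\sim\frac{\Gamma(b)}{\Gamma(a)}e^{z}z^{a-b}+\cdots$ is not uniform in $a=\frac{k-1}{2}-\varepsilon+it$ when $|t|$ is comparable to $z=2\pi n$: its error terms involve powers of $|a|^{2}/z$, so it is only usable for $|t|=O(\sqrt{n})$, and your first regime does not actually cover $\sqrt{n}\ll|t|\lesssim n$. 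In the complementary regime your crude bound is both misstated and insufficient: from $|(a)_m|\le(|a|)_m$ one gets $|M(a,b,z)|\le M(|a|,b,z)\ll e^{z/2+2\sqrt{|a|z}}$ up to polynomial factors, and the $e^{z/2}$ cannot be dropped; with $z=2\pi n$ and $|a|\asymp|t|$ the total exponent available to you, $-2\pi n+\pi n+2\sqrt{2\pi n|t|}-\frac{\pi}{2}|t|$, is \emph{positive} in the cone $|t|\asymp n$ (at $|t|=\frac{8}{\pi}n$ it equals $(4-\pi)n$), so nothing is ``swamped'' by the Gamma decay precisely where your two regimes meet. As proposed, the splitting does not close, and this is not a cosmetic issue: obtaining an estimate for $M$ that is uniform in the imaginary part of the parameter is the one nontrivial ingredient here.

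What rescues the argument is what you gesture at in your last sentence: the required estimate is exactly Lemma \ref{ubound} of the paper, $M(a+ix,b,y)\ll_{a,b,\varepsilon} e^{y}y^{a-b+\varepsilon}/|\Gamma(a+ix)|$ uniformly in $x\in\mathbb R$, proved not from the standard fixed-parameter asymptotics but from the Whittaker--Bessel integral representation and a Laplace-type analysis. With it, the factor $1/|\Gamma(\frac{k-1}{2}-\varepsilon+it)|\asymp e^{\pi|t|/2}|t|^{1-\frac{k}{2}+\varepsilon}$ combines with Stirling for your Gamma quotient to give an integrand $\ll(1+|t|)^{2\varepsilon-\rho}$, integrable for $\rho>1$, while the $e^{2\pi n}$ cancels and Deligne's bound makes the $n$-sum converge; that is precisely \eqref{|I21|}, and then your $|E(X)|\le KX^{-\varepsilon}$ conclusion is fine. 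Note also that the statement of Theorem \ref{Sect4int} alone does not assert this absolute convergence, so ``part of the content of the theorem'' really means ``established inside its proof'': you must invoke that lemma (or prove an equivalent uniform bound), and your sketched substitute for it would fail in the transition range $|t|\asymp n$.
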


In the next section, we present the basic objects we will be studying. In Section \ref{section3}, we state and prove the summation formula for general harmonic Maass forms. In the final section, we prove the Riesz-type summation formula and use it to derive the application to weighted moments of Fourier coefficients of holomorphic cusp forms. 

\section{Preliminaries}
Let $k \in \frac{1}{2} \mathbb{Z}$ and $N \in \mathbb N.$ We write an element of the upper half-plane $\mathbb H$ as $z = x+iy$, with $x,y  \in \mathbb{R}$, $y>0$. Further $q:=e^{2 \pi i \, z}.$ We choose the principal branch of the square-root throughout.
  
The {\it (Petersson) slash operator} is defined as 
    \begin{align*}
    &\left(f\vert_k\gamma\right)(\, z) := \begin{cases}
    (c\, z+d)^{-k} f(\gamma\, z) & \text{if } k \in \mathbb{Z}, \\
    \left(\frac{c}{d}\right)\varepsilon_d^{2k}(c\, z+d)^{-k} f(\gamma\, z) & \text{if } k \in \frac{1}{2}+\mathbb{Z},
    \end{cases} 
    \quad \\
    & \text{for} \, \, \gamma = \left(\begin{matrix} a & b \\ c& d \end{matrix}\right) \in \begin{cases}
    \operatorname{SL}_2(\mathbb{Z}) & \text{if } k \in \mathbb{Z}, \\
    \Gamma_0(4) & k \in \frac{1}{2}+\mathbb{Z},
    \end{cases} 
    \end{align*}
    where $\left(\frac{c}{d}\right)$ denotes the Kronecker symbol, and
    \begin{align*}
    \varepsilon_d := 
    \begin{cases}
1 & \text{if } d \equiv 1 \pmod{4} , \\
i & \text{if } d \equiv 3 \pmod{4}.
\end{cases}
    \end{align*}
We define the action of $w_N=\left ( \begin{smallmatrix} 0 & -1/\sqrt{N} \\ \sqrt{N} & 0 \end{smallmatrix} \right )$ on a function $f: \mathbb H \to \mathbb C$ as
    \begin{equation}\label{eq:fricke}
      f|_k w_N (\, z) :=  N^{k/2} (N \, z)^{-k} f(-1/N\, z). 
    \end{equation}
For $\varphi: \mathbb R_+ \to \mathbb C,$ we also define the function $\varphi|_kw_N$ given by
  \begin{equation}\label{eq:fricketest}
      \varphi|_k w_N (x) := (N x)^{-k} f(1/Nx) \qquad \text{for $x \in \mathbb R_+$} 
    \end{equation}
and, for $s \in \mathbb C$, we set 
\begin{equation}
\varphi_s(x):=\varphi(x)x^{k-1}.
\end{equation}

    \begin{defn} Let $k \in \frac{1}{2}\mathbb Z$ and $N \in \mathbb N$ (with $4|N$ whenever $k \in \frac{1}{2} + \mathbb{Z}$). 
Let $\chi$ be a Dirichet character $\mod N.$ We call a smooth function $f\colon \mathbb{H} \to \mathbb{C}$ a {\it harmonic Maass form of weight $k$ and character $\chi$} on $\Gamma_0(N)$, if it satisfies the following conditions:
    \begin{enumerate}
    \item For all $\gamma \in \Gamma_0(N)$, we have $f|_k \gamma = \chi(\gamma)f$.
    \item We have $\Delta_k f = 0$, where $
    \Delta_k := -y^2\left(\frac{\partial^2}{\partial x^2}+\frac{\partial^2}{\partial y^2}\right) + iky\left(\frac{\partial}{\partial x} + i\frac{\partial}{\partial y}\right).$
    \item For each $\sigma \in \SL_2(\mathbb Z)$ , there is a polynomial $P_{\sigma}$ such that $(f|_k \sigma)(z)-P_{\sigma}(q^{-1})=O(e^{-\varepsilon_{\sigma} y})$ as $y \to \infty$, for some $\varepsilon_{\sigma}>0$. 
    \end{enumerate} \label{def:hmfpg}
    \end{defn}
    We let $H_k(N,\chi)$ denote the space of such functions.  Each $f \in H_k(N,\chi)$ has a Fourier expansion of the form
 \begin{equation}\label{eq:FourierExpansion}
       f(\, z) = \sum_{n \ge -n_0}^{\infty} c^+(n) q^n + \sum_{n=1}^{\infty} c^-(n)\Gamma(1-k, 4 \pi n y) q^{-n}. 
       \end{equation}
and similar expansions at all other cusps. For some $C>0,$ we have $c^+(n)=O(e^{C\sqrt{n}})$ as $n \to \infty$, while $c^-(n)$ have polynomial growth as $n \to \infty$ (see \cite[Sec. 5.2]{thebook}) 
    
If $c^-(n)=0$ for all $n > 0$ and at all cusps, then $f$ is called a {\it weakly holomorphic modular form.} Their space is denoted by $M^!_k(N, \chi)$. If, further $c^+(0)=0$ (at all cusps), $f$ is called a {\it weakly holomorphic cusp form.} Their space is denoted by $S^!_k(N, \chi)$. 
We retrieve the space of weight $k$ holomorphic modular (resp. cusp) forms for level $N$ as the space $M_k(N,\chi)$ (resp. $S_k(N,\chi)$) of $f \in M^!_k(N,\chi)$ such that $c^+(n)=0$ for all $n<0$ (resp. $n \le 0$) (and similarly at the other cusps).

Another important subspace of $H_k(N, \chi)$ is the space $H'_k(N,\chi)$ of harmonic Maass forms such that for each $\sigma \in \SL_2(\mathbb Z)$, $(f|_k \sigma)(z)-a_{\sigma}=O(e^{-\varepsilon_{\sigma} y})$ as $y \to \infty$, for some $a_{\sigma} \in \mathbb C$ and $\varepsilon_{\sigma}>0$. 

The spaces $H_{k}(N, \chi)$ and $S_{2-k}(N, \bar \chi)$ are connected by the operator $\xi_k$ mapping a $f \in H_{k}(N, \chi)$ with expansion \eqref{eq:FourierExpansion} to its ``shadow''
\begin{equation}\label{xi} 
\xi_{k}f:=2iy^{k} \overline{\frac{\partial f}{\partial \bar z}} =
 -\sum_{n=1}^{\infty} \overline{c^-(n)}(4 \pi n)^{1-k}q^{n}
       \in S_{2-k}(N, \bar \chi).
\end{equation}
This, in particular, implies that
\begin{equation}\label{genuine} 
H_{k}(N, \chi)=M^!_k(N, \chi), \qquad \text{if $k>2$.}
\end{equation}

We now recall the definition of $L$-series of $f \in H_k(N, \chi)$ given in \cite{DLRR23}. The domain $\scrF_f$ we will be using in this is note consists of $\varphi\in C(\R, \C)$ such that the Laplace integral $$(\scrL\varphi)(s)=\int_0^{\infty}\varphi(x)e^{-sx}dx$$ (resp. $(\scrL\varphi_{2-k})(s)$) converges absolutely for all $s$ with $\Re(s) \ge -2 \pi n_0$ (resp. $\Re(s)>0$), 
and the following series converges:  
\begin{equation}\label{Ff}
\sum_{\substack{n \ge -n_0}} |c^+(n)| (\scrL|\varphi|)\left (2 \pi n\right ) 
+ \sum_{n>0} |c^-(n)|\left (4\pi n\right )^{1-k}\int_0^{\infty}\frac{(\scrL |\varphi_{2-k}|)\left (2\pi n(2t+1)\right )}{(1+t)^{k}}dt. 
\end{equation}
The value of the series $L_f$ at $\varphi \in \scrF_f$ is then 
\begin{equation*}
L_f(\varphi)=L^1_{f}(\varphi)+ L^2_{f}(\varphi), \quad \text{where}
\end{equation*}
\begin{equation}\label{L1L2}
L^1_{f}(\varphi)=\sum_{\substack{n \ge -n_0}} c^+(n) (\scrL \varphi)\left (2 \pi n\right ) \, \text{and} \, 
L^2_{f}(\varphi)=\sum_{n>0} \frac{c^-(n)}{\left (4\pi n\right )^{k-1}}\int_0^{\infty}\frac{(\scrL \varphi_{2-k})\left (2\pi n(2t+1)\right )}{(1+t)^{k}}dt.
\end{equation}
A useful identity is \cite[(4.14)]{DLRR23}
\begin{equation}\label{nonhol}
\int_0^{\infty}\Gamma\left(1-k, 4 \pi n y \right)e^{2 \pi n y}\varphi(y)dy
= \left (4\pi n \right )^{1-k}\int_0^{\infty}\frac{(\scrL\varphi_{2-k})
 \left (2\pi n(2t+1)\right )}{(1+t)^k} dt. 
\end{equation}

\bigskip 

\section{A summation formula for a weakly holomorphic cusp forms}\label{section3}
Let $f \in H_k(N, \chi)$ with Fourier expansion \eqref{eq:FourierExpansion}.
We let $d^{\pm}(n)$ be the coefficients of the corresponding expansion of $g:=f|w_N \in H_k(N, \bar \chi)$ (or $H_k \left (N, \bar \chi \left ( \frac{N}{\bullet}\right ) \right )$, if $k \in \frac12+\mathbb Z$ ). We will first prove a ``pre-summation formula" for broad pair of test functions. We recall the notation 
$$\mathcal M(f)(s)=\int_0^{\infty}f(x)x^{s-1} dx$$
for the Mellin transform of $f:\mathbb R_+ \to \mathbb C.$
\begin{proposition}
    \label{preSF} Fix a continuous $F: \mathbb R_+ \to \mathbb C$ such that $\mathcal M(F)(s)$ converges absolutely and uniformly in compacta in $\{s; \Re(s)<0\}$ giving an absolutely integrable function over each $-\varepsilon+i\mathbb R$. Let $\varphi \in C(\mathbb R_+, \mathbb C)$ such that, for all $s \in \mathbb C,$ $\varphi_s \in \scrF_f, 
    \varphi_s|_{2-k}w_N \in \scrF_g$ and $L_f(\varphi_s), L_g(\varphi_s|_{2-k}w_N)$ are entire functions of $s.$ 
    We then have
\begin{multline}
        \label{preSFform} \sum_{n \ge -n_0} c^+(n) \int_0^{\infty}F(x)\varphi(x) e^{-2 \pi n x}\frac{dx}{x} 
        +
        \sum_{n >0} c^-(n)\int_0^{\infty}F(x) \varphi(x) e^{2 \pi n x} \Gamma(1-k, 4 \pi n x) \frac{dx}{x}
        \\=i^k N^{-\frac{k}{2}} \left ( \sum_{n \ge -n_0} d^+(n) \int_0^{\infty} F(x) \varphi(x)x^{-1-k} e^{-\frac{2 \pi n }{Nx}} dx \right.   \\
         \left. +\sum_{n >0} d^-(n)\int_0^{\infty}F(x) \varphi(x)x^{-k-1} e^{\frac{2 \pi n }{Nx}} \Gamma \left (1-k, \frac{4 \pi n}{Nx} \right ) dx
 \right )
    \end{multline}
\end{proposition}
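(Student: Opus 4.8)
The plan is to express both sides of \eqref{preSFform} as contour integrals of Mellin-type transforms and then invoke the functional equation relating $L_f$ and $L_g$. First I would write $F$ via Mellin inversion: since $\mathcal M(F)(s)$ is absolutely integrable on $-\varepsilon + i\mathbb R$ for suitable $\varepsilon>0$, we have $F(x) = \frac{1}{2\pi i}\int_{(-\varepsilon)} \mathcal M(F)(s)\, x^{-s}\, ds$. Substituting this into the left-hand side of \eqref{preSFform} and interchanging the $\sum$, the $\int_0^\infty \cdots \frac{dx}{x}$, and the contour integral (justified by the absolute convergence built into the hypothesis $\varphi_s \in \scrF_f$ together with absolute integrability of $\mathcal M(F)$), the inner $x$-integral becomes, for the holomorphic part, $\int_0^\infty \varphi(x) x^{-s-1} e^{-2\pi n x}\, dx = (\scrL\varphi_{-s})(2\pi n)$ in the notation $\varphi_s(x) = \varphi(x) x^{k-1}$ — more precisely one matches indices so that this is the Laplace transform appearing in $L^1_f(\varphi_{s'})$ for the appropriate shift $s'$ of $s$. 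For the non-holomorphic part, the inner integral is $\int_0^\infty \varphi(x) x^{-s-1} e^{2\pi n x}\Gamma(1-k,4\pi n x)\, dx$, which by the identity \eqref{nonhol} equals $(4\pi n)^{1-k}\int_0^\infty (\scrL(\varphi_{-s})_{2-k})(2\pi n(2t+1))(1+t)^{-k}\, dt$, i.e. exactly the summand of $L^2_f$. Hence the LHS of \eqref{preSFform} is $\frac{1}{2\pi i}\int_{(-\varepsilon)} \mathcal M(F)(s)\, L_f(\varphi_{-s})\, ds$ (up to bookkeeping of the exact shift, which I would fix carefully).

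Next I would do the same computation on the right-hand side: the Fricke-transformed function $\varphi_s|_{2-k} w_N$ has Laplace transforms that, after the Mellin substitution and the change of variables $x \mapsto 1/(Nx)$, reproduce precisely the integrals $\int_0^\infty F(x)\varphi(x) x^{-1-k} e^{-2\pi n/(Nx)}\, dx$ and the $\Gamma$-weighted analogue, again using \eqref{nonhol} for the non-holomorphic piece. So the RHS of \eqref{preSFform} equals $i^k N^{-k/2} \cdot \frac{1}{2\pi i}\int_{(-\varepsilon)} \mathcal M(F)(s)\, L_g(\varphi_{-s}|_{2-k} w_N)\, ds$. The proposition then reduces to the pointwise identity $L_f(\varphi_{-s}) = i^k N^{-k/2} L_g(\varphi_{-s}|_{2-k} w_N)$ for $\Re(s) = -\varepsilon$, which is the functional equation for the $L$-series of \cite{DLRR23} (this is where $g = f|w_N$, the twist of the character, and the hypothesis that $L_f(\varphi_s)$, $L_g(\varphi_s|_{2-k}w_N)$ are entire all get used — entireness lets one apply the functional equation on the whole line $\Re(s)=-\varepsilon$ rather than just in a strip of convergence).

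The main obstacle I expect is justifying all the interchanges of summation and integration rigorously. The double limit — sum over $n$, Laplace integral over $x$, and Mellin contour integral over $s$ — must be controlled simultaneously; the hypothesis is designed for this, since $\varphi_s \in \scrF_f$ gives absolute convergence of $\sum_n |c^+(n)| (\scrL|\varphi_s|)(2\pi n) + \sum_n |c^-(n)|(\cdots)$ for each $s$, but one needs this to be \emph{locally uniformly bounded} in $s$ along $-\varepsilon + i\mathbb R$ so that it can be integrated against the absolutely integrable $\mathcal M(F)$. I would handle this by noting that $|\varphi_s(x)| = |\varphi(x)| x^{k-1+\sigma}$ with $\sigma = \Re(s)$ fixed on the contour, so the relevant majorants depend on $s$ only through $\sigma$ and the imaginary part contributes nothing to absolute values; combined with continuity of $s \mapsto L_f(\varphi_s)$ (a consequence of the assumed entireness and the convergence in $\scrF_f$), Fubini–Tonelli applies. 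A secondary technical point is tracking the precise index shift between the variable $s$ in the Mellin inversion of $F$ and the subscript in $\varphi_s$, and confirming the power of $i$ and of $N$ match \eqref{eq:fricke}–\eqref{eq:fricketest}; this is routine bookkeeping once the structure above is in place.
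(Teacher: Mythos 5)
Your proposal follows essentially the same route as the paper's proof: Mellin inversion of $F$, interchange of sum and integrals justified by the $\scrF_f$/$\scrF_g$ hypotheses (whose majorants on a vertical line depend only on $\Re(s)$) together with the identity \eqref{nonhol}, then the functional equation of Theorem 4.5 of \cite{DLRR23}, and finally the change of variables $x \mapsto 1/(Nx)$ to produce the right-hand side, so it is correct in substance. The only discrepancy is the constant you already flag as bookkeeping: the functional equation carries $i^k N^{1-\frac{k}{2}}$, and the factor $N^{-1}$ from the Jacobian of the change of variables is what yields the $i^k N^{-\frac{k}{2}}$ appearing in \eqref{preSFform}.
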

\begin{proof} Set $\psi(s)=\mathcal M(F(x^{-1}))(s)=\mathcal M(F)(-s).$ By the assumptions on $F$ this is analytic for $\Re(s)>0$ and $F(x)=\mathcal M^{-1}(\psi)(x^{-1})$. For some $\varepsilon>0$, we consider \begin{equation} \label{genI}
       I:=I_1+I_2 \quad \text{where $I_j:=\frac{1}{2 \pi i} \int_{(\varepsilon)} \psi(s)L^j_f(\varphi_s)ds.$}
    \end{equation}
    Because of the assumptions on $\varphi_s$ and $F$, we see that
    \begin{multline}\label{crit}\int_{(\varepsilon)}|\psi(s)|\left ( \sum_{n \ge -n_0}|c^+(n)| \int_0^{\infty}|\varphi_s(x)|e^{-2 \pi n x}dx \right. \\
    \left. +\sum_{n >0}|c^-(n)| \int_0^{\infty}|\Gamma(1-k, 4 \pi n x) \varphi_s(x)|e^{2 \pi n x}dx \right ) ds< \infty \end{multline}
   and hence, we can interchange all summations and integrations. Unraveling the formula for $L_f(\varphi_s)$
   we obtain
   $$I_1=\frac{1}{2 \pi i}\sum_{n \ge -n_0} c^+(n) \int_0^{\infty}\varphi(x)x^{-1} e^{-2 \pi n x} \int_{(\varepsilon)}\psi(x)x^{s}ds dx$$
   and
    $$I_2=\frac{1}{2 \pi i}\sum_{n >0} c^-(n) \int_0^{\infty}\Gamma(1-k, 4 \pi n x)\varphi(x)x^{-1} e^{2 \pi n x} \int_{(\varepsilon)}\psi(x)x^{s}ds dx.$$
    By the assumptions on $\varphi_s$ and $\varphi_s|_{2-k} w_N$, Theorem 4.5 of \cite{DLRR23} holds to give 
    \begin{equation}\label{FE}
    L_f(\varphi_s)=i^k N^{1-\frac{k}{2}}L_g(\varphi_{s}|_{2-k}w_N),
    \end{equation}
    and hence
    \begin{equation}
        \label{comp}
I=i^kN^{1-\frac{k}{2}} \frac{1}{2 \pi i} \int_{(\varepsilon)} \psi(s)L^j_g(\varphi_s|_{2-k}w_N)ds.    \end{equation}
To compute the RHS of \eqref{comp} we first note, with the change of variables $1/(Nx) \to x$, that
$$\int_{(\varepsilon)}\psi(s) \int_0^{\infty}\varphi(1/(Nx))(xN)^{k-s-1}e^{-2 \pi n x}dx ds=
\frac{1}{N}\int_{(\varepsilon)}\psi(s) \int_0^{\infty}\varphi(x)x^{s-k-1}e^{-\frac{2 \pi n}{Nx}}dx ds
$$
from which the first term in the RHS of \eqref{preSFform} is obtained after changing the order integration.
The second term is computed similarly. The proposition follows by replacing these terms and $I_1, I_2$ into \eqref{comp}.
\end{proof}
We will deduce our summation formula by applying this proposition with a specific $\psi$ and $\phi$. Specifically,
assume that $c^{+}(n), d^{+}(n)=O(e^{C_f \sqrt{n}})$, as $n \to \infty$. Fix some constant $C>\max(C^2_f\sqrt{N}/(8 \pi ), 2 \pi n_0/\sqrt{N})$ and let $\varphi: \mathbb R_+ \to \mathbb R$ be given by
$$\varphi(x)=e^{-C \left ( \sqrt{N}x+\frac{1}{ \sqrt{N}x}\right )}, \qquad \text{for $x>0$.}$$
Further, for each $Y>0$, we set $$F(x):=e^{-\frac{1}{xY}}.$$
We first observe that, for $k \in \frac12 \mathbb Z$, we have
\begin{equation}\label{inv} \varphi_s|_{2-k}w_N=N^{k-s-1}\varphi_{k-s}.
\end{equation}
We also have the following lemma 
\begin{lemma}\label{FfFg}For each $s \in \mathbb C$, $\varphi_s \in \mathcal F_f$ and $\varphi_s|_{2-k}w_N \in \mathcal F_g$. Further, viewed as functions of $s \in \mathbb C$, $L_f(\varphi_s)$ and $L_{f|_k w_N}(\varphi_s|_{2-k}w_N)$ are entire.
\end{lemma}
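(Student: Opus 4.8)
The plan is to verify, one at a time, the two membership statements and the two analyticity statements, with the whole argument resting on three elementary features of the chosen $\varphi$: it vanishes faster than any power of $x$ as $x\to 0^+$ (because of the factor $e^{-C/(\sqrt N x)}$), it decays like $e^{-C\sqrt N x}$ as $x\to\infty$, and it satisfies $\varphi(1/(Nx))=\varphi(x)$ --- the last being exactly what produces \eqref{inv}. First I would treat the Laplace integrals. For every $s$ the function $\varphi_s(x)=\varphi(x)x^{s-1}$ is continuous on $\mathbb R_+$, extends continuously by $0$ at the origin irrespective of $\Re(s)$, and is $O\!\left(x^{\Re(s)-1}e^{-C\sqrt N x}\right)$ at infinity; hence $(\scrL\varphi_s)(w)=\int_0^\infty\varphi(x)x^{s-1}e^{-wx}\,dx$ converges absolutely for $\Re(w)>-C\sqrt N$, in particular for $\Re(w)\ge -2\pi n_0$ since $C>2\pi n_0/\sqrt N$, and the same bound handles $(\scrL(\varphi_s)_{2-k})(w)=\int_0^\infty\varphi(x)x^{s-k}e^{-wx}\,dx$ for $\Re(w)>0$.

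The crux is the convergence of the series \eqref{Ff} with test function $\varphi_s$. For the first of the two sums I would use the classical evaluation $\int_0^\infty x^{\nu-1}e^{-ax-b/x}\,dx=2(b/a)^{\nu/2}K_\nu(2\sqrt{ab})$ with $a=C\sqrt N+2\pi n$ (which is positive for $n\ge -n_0$ precisely because $C\sqrt N>2\pi n_0$) and $b=C/\sqrt N$, which writes $(\scrL|\varphi_s|)(2\pi n)$ exactly as a $K$-Bessel value of order $\Re(s)$; the asymptotic $K_\nu(z)\sim\sqrt{\pi/(2z)}\,e^{-z}$, locally uniform in $\nu$, then gives
$$(\scrL|\varphi_s|)(2\pi n)=O\!\left(n^{-\Re(s)/2-1/4}\,e^{-2\sqrt{2\pi C n/\sqrt N}}\right)\qquad(n\to\infty),$$
the implied constant being uniform for $\Re(s)$ in any bounded interval. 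Since $c^+(n)=O(e^{C_f\sqrt n})$ and the hypothesis $C>C_f^2\sqrt N/(8\pi)$ is exactly the inequality $C_f<2\sqrt{2\pi C/\sqrt N}$, the first sum converges --- and converges locally uniformly in $s$. The second sum is softer: the same evaluation applied to $(\scrL|(\varphi_s)_{2-k}|)(2\pi n(2t+1))$ shows that its $t$-integrand decays like $e^{-c\sqrt{n(1+t)}}$ for some $c>0$, so the inner $t$-integral is $O(n^{-A})$ for every $A$ (uniformly for $\Re(s)$ bounded), and summation against the polynomially bounded coefficients $c^-(n)(4\pi n)^{1-k}$ converges. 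This establishes $\varphi_s\in\mathcal F_f$.

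Membership $\varphi_s|_{2-k}w_N\in\mathcal F_g$ follows by invoking \eqref{inv}: $\varphi_s|_{2-k}w_N=N^{k-s-1}\varphi_{k-s}$ is, up to the nonzero constant $N^{k-s-1}$, a test function of exactly the same form with $s$ replaced by $k-s$, so the identical argument applies to $g=f|_kw_N$, whose holomorphic coefficients $d^+(n)$ satisfy the same bound $O(e^{C_f\sqrt n})$ by hypothesis, whose non-holomorphic coefficients $d^-(n)$ are polynomially bounded, and whose pole order at $\infty$ is likewise accommodated by the choice of $C$. Analyticity is then immediate: each $(\scrL\varphi_s)(2\pi n)$ and each $t$-integral occurring in $L^2_f(\varphi_s)$ is entire in $s$ (the integrand is entire in $s$ and the integral converges locally uniformly, by Morera and Fubini), and the locally uniform convergence in $s$ of both series exhibits $L_f(\varphi_s)=L^1_f(\varphi_s)+L^2_f(\varphi_s)$ as a locally uniform limit of entire functions, hence entire; and $L_{f|_kw_N}(\varphi_s|_{2-k}w_N)=N^{k-s-1}L_g(\varphi_{k-s})$ is the composite of the entire function $s\mapsto L_g(\varphi_s)$ (entire by the same reasoning, applied to $g$) with the entire substitution $s\mapsto k-s$, times the entire factor $N^{k-s-1}$.

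I expect the main obstacle to be the sharp exponential estimate $(\scrL|\varphi_s|)(2\pi n)\ll e^{-2\sqrt{2\pi C n/\sqrt N}}$, together with its uniformity in $s$ on compact sets: the constant $2\sqrt{2\pi C/\sqrt N}$ must come out exactly right, since it is precisely what the hypothesis $C>C_f^2\sqrt N/(8\pi)$ is calibrated against and what makes the first series in \eqref{Ff} --- hence $L^1_f$ and $L^1_g$ --- converge at all. Once that estimate, with its $s$-uniformity, is in hand, the convergence of the Laplace integrals, the convergence of the second series, and the passage to analyticity are all routine.
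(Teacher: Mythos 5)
Your proposal is correct and follows essentially the same route as the paper: the explicit evaluation of the Laplace transform as a $K$-Bessel value \cite[(10.32.10)]{DLMF}, the Bessel asymptotics \cite[(10.40.2)]{DLMF} giving decay $e^{-2\sqrt{2\pi C n/\sqrt N}}$ calibrated against $c^+(n)=O(e^{C_f\sqrt n})$ via $C>C_f^2\sqrt N/(8\pi)$, polynomial growth of $c^-(n)$ for the non-holomorphic sum, the identity \eqref{inv} to transfer everything to $g$, and locally uniform convergence in $s$ for entirety. The only cosmetic difference is that the paper bounds the non-holomorphic contribution through the incomplete-Gamma integral and the identity \eqref{nonhol}, while you estimate the $t$-integral in \eqref{Ff} directly; these are the two sides of the same identity and yield the same bound.
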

\begin{proof} Since $C> 2 \pi n_0/\sqrt{N}$, the integral defining $\cL \varphi_s(2\pi n)$ is absolutely convergent for $n\geq -2 \pi n_0$. Further, for $n>0$,
\begin{equation}\label{Lap} \cL \varphi_s(2 \pi n)=\int_{0}^{\infty} e^{-(C\sqrt{N}+2 \pi n)x-\frac{C}{\sqrt{N}x}}x^{s-1}dx=2 \left ( \frac{C/\sqrt{N}}{C\sqrt{N}+2 \pi n} \right )^{s/2}K_{s}\left(2\sqrt{C\left(C+\frac{2\pi n}{\sqrt N}\right)}\right)
\end{equation}
by \cite[(10.32.10)]{DLMF}. Here $K_s(z)$ denotes the modified Bessel function.  With \cite[(10.40.2)]{DLMF}, we deduce  
$$ \cL \varphi_s(2 \pi n)=O \left (\left (C+\frac{2 \pi n}{\sqrt N} \right )^{-\frac{2\Re(s)+1}{4}}e^{-2\sqrt{C \left (C+\frac{2 \pi n}{\sqrt{N}}\right )}}\right ) \quad \text{as $n\rightarrow\infty$.} $$
Since $C> C^2_f\sqrt{N}/(8 \pi )$, this implies that the first sum of \eqref{Ff}
converges uniformly for $s$ in compacta. For the non-holomorphic part, the same combination of \cite[(10.32.10)]{DLMF} and \cite[(10.40.2)]{DLMF}, together with the asymptotics of $\Gamma(1-k, x)$, implies that, for some $A \in \mathbb R$ and $B>0$ that depend only on $\Re(s)$, we have 
\begin{equation*}
\int_0^{\infty}\Gamma\left(1-k, 4 \pi n y \right)e^{2 \pi n y}|\varphi_s(y)|dy
\ll \int_0^{\infty}e^{-(2 \pi n+C\sqrt{N})y-\frac{C}{\sqrt{N}y}}y^{A}dy \ll e^{-B \sqrt{n}}.
\end{equation*}
With \eqref{nonhol} and the polynomial growth of $c^-(n)$, we deduce that the second sum of \eqref{Ff} converges uniformly too. Therefore, $\varphi_s \in \mathcal F_f$ and $L_f(\varphi_s)$ is holomorphic in $s.$ 
With \eqref{inv}, it is easy to see the analogous assertions for $\varphi_s|_{2-k}w_N$ and $L_{f|_k w_N}(\varphi_s|_{2-k}w_N).$
\end{proof}
With this notation and facts, we are now ready to state and prove our summation formula.
\begin{theorem}\label{SF1} Suppose that $k$ is non-positive integer. With the above notation, set $D_n:=C+2 \pi n/\sqrt{N}.$ 
Then we have, for all $X>0$,
\begin{multline}\label{SF} \sum_{n \ge -n_0} c^+(n) K_0 \left (2 \sqrt{D_n(C+\sqrt{N} X)} \right )\\+
\sum_{\substack{\ell=0}}^{|k|} \frac{|k|!}{\ell!} \left ( \frac{4 \pi \sqrt{C}}{\sqrt{N}}\sqrt{1+\frac{\sqrt{N}X}{C}}\right )^{\ell} \sum_{n>0}c^-(n) \left ( \frac{n}{\sqrt{D_n}}\right )^{\ell} K_{\ell} \left(2 \sqrt{D_n(C+\sqrt{N}X)} \right )
\\=
i^k C^\frac{k}{2}\sum_{n \ge -n_0} d^+(n) \left ( \sqrt{N}X+D_n \right )^{-\frac{k}{2}} K_k \left ( 2 \sqrt{C ( D_n+\sqrt{N}X )} \right)\\
+i^k C^{\frac{k}{2}} \sum_{\ell=0}^{|k|}\frac{|k|!}{\ell!} \left ( \frac{4 \pi \sqrt{C}}{\sqrt{N}}\right )^\ell
\sum_{n>0}d^-(n)n^{\ell}
\left ( D_n+\sqrt{N}X\right )^{-\frac{\ell+k}{2}} K_{\ell+k} \left ( 2\sqrt{C (D_n+ \sqrt{N} X)}\right ).
\end{multline}
In the special case that $f$ is weakly holomorphic, \eqref{SF} holds for all $k \in \frac{1}{2}\mathbb Z$.
\end{theorem}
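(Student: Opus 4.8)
The plan is to apply Proposition~\ref{preSF} to the pair consisting of $\varphi(x)=e^{-C(\sqrt N x+1/(\sqrt N x))}$, fixed as above, and the function $F(x)=e^{-1/(xY)}$ with $Y=1/X$ (so $F(x)=e^{-X/x}$), and then to evaluate in closed form the four integrals occurring in \eqref{preSFform}. The hypotheses on $\varphi$ required by Proposition~\ref{preSF} are exactly the content of Lemma~\ref{FfFg}, while for $F$ the substitution $t=X/x$ gives $\mathcal M(F)(s)=X^{s}\Gamma(-s)$, which converges absolutely and locally uniformly in $\{\Re(s)<0\}$ and, by Stirling, decays exponentially on each line $-\varepsilon+i\R$; hence $F$ meets the Mellin hypothesis of Proposition~\ref{preSF}, and \eqref{preSFform} holds for this $F$ and $\varphi$.

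Each of the two holomorphic integrals in \eqref{preSFform} has the form $\int_0^{\infty}e^{-ax-b/x}x^{\nu-1}\,dx=2(b/a)^{\nu/2}K_{\nu}(2\sqrt{ab})$ from \cite[(10.32.10)]{DLMF}. On the left, $\varphi(x)F(x)e^{-2\pi n x}x^{-1}$ gives $a=C\sqrt N+2\pi n=\sqrt N\,D_n$, $b=C/\sqrt N+X$, $\nu=0$, so $ab=D_n(C+\sqrt N X)$ and the integral equals $2K_0\!\left(2\sqrt{D_n(C+\sqrt N X)}\right)$. On the right, $\varphi(x)F(x)x^{-1-k}e^{-2\pi n/(Nx)}$ gives $a=C\sqrt N$, $b=C/\sqrt N+X+2\pi n/N$, $\nu=-k$, hence $ab=C(D_n+\sqrt N X)$, $b/a=(D_n+\sqrt N X)/(CN)$, and, since $K_{-k}=K_k$, the integral equals $2(CN)^{k/2}(D_n+\sqrt N X)^{-k/2}K_k\!\left(2\sqrt{C(D_n+\sqrt N X)}\right)$; multiplied by the prefactor $i^kN^{-k/2}$ this becomes $2i^kC^{k/2}(D_n+\sqrt N X)^{-k/2}K_k(2\sqrt{C(D_n+\sqrt N X)})$.

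For the non-holomorphic integrals I would use that, $k$ being a non-positive integer, $1-k\in\mathbb Z_{\ge 1}$ and so $\Gamma(1-k,z)=|k|!\,e^{-z}\sum_{\ell=0}^{|k|}z^{\ell}/\ell!$. Substituting this finite sum (and interchanging it with the integral, which is legitimate since each summand is absolutely integrable by the estimates in the proof of Lemma~\ref{FfFg}), the $e^{-4\pi n x}$ from the $\Gamma$-expansion combines with the $e^{2\pi n x}$ in the integrand to give $e^{-2\pi n x}$ (and $e^{2\pi n/(Nx)}$ becomes $e^{-2\pi n/(Nx)}$ on the right), reducing each non-holomorphic term to a finite combination of the same Bessel integrals, now with $\nu=\ell$ on the left (producing $K_\ell$ and the extra weight $(4\pi n)^{\ell}x^{\ell}$) and $\nu=-k-\ell$ on the right (producing $K_{\ell+k}$). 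Collecting the powers of $N$, $C$ and $4\pi$ and rewriting $(C+\sqrt N X)/C=1+\sqrt N X/C$ reproduces precisely the coefficients $\frac{|k|!}{\ell!}\!\left(\frac{4\pi\sqrt C}{\sqrt N}\sqrt{1+\sqrt N X/C}\right)^{\ell}$ on the left and $\frac{|k|!}{\ell!}\!\left(\frac{4\pi\sqrt C}{\sqrt N}\right)^{\ell}$ on the right of \eqref{SF}. A single factor $2$ appears on both sides of the resulting identity and cancels; that both sides are genuinely absolutely convergent follows from $K_{\nu}(z)\sim\sqrt{\pi/(2z)}\,e^{-z}$, $D_n\sim2\pi n/\sqrt N$, the bounds $c^{+}(n),d^{+}(n)=O(e^{C_f\sqrt n})$, the polynomial growth of $c^{-}(n),d^{-}(n)$, and the assumptions $C>C_f^2\sqrt N/(8\pi)$, $X>0$. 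This establishes \eqref{SF} for every non-positive integer $k$. Finally, if $f\in M^!_k(N,\chi)$ then $c^{-}(n)=0$ and, since the Fricke involution $w_N$ preserves weak holomorphy, $g=f|_kw_N\in M^!_k$ as well, so $d^{-}(n)=0$; the non-holomorphic sums in \eqref{SF} then vanish, only the two holomorphic integrals are used, and their evaluation via \cite[(10.32.10)]{DLMF} never used the integrality of $k$, so \eqref{SF} holds for all $k\in\tfrac12\mathbb Z$.

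The step I expect to be the main obstacle is the constant-chasing of the previous paragraph: one must track the powers of $N$, $C$ and $4\pi$ together with the shifts $D_n$ and $D_n+\sqrt N X$ through all four Bessel evaluations and the $\Gamma$-expansion so as to land exactly on the coefficients in \eqref{SF}, and one must carefully justify the absolute convergence that licenses splitting the non-holomorphic integrals into their finite $\ell$-sums; everything else is a direct application of Proposition~\ref{preSF}, Lemma~\ref{FfFg} and the Bessel integral \cite[(10.32.10)]{DLMF}.
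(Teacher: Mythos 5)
Your proposal is correct and follows essentially the same route as the paper: apply Proposition \ref{preSF} with the same $\varphi$ and $F$ (the paper works with $\psi(s)=\mathcal M(F(x^{-1}))(s)=Y^{s}\Gamma(s)$ and sets $X=1/Y$ at the end, which is the same normalisation as your $\mathcal M(F)(s)=X^{s}\Gamma(-s)$), then evaluate the four integrals of \eqref{preSFform} via \cite[(10.32.10)]{DLMF} and the finite expansion \eqref{Gamma} of $\Gamma(1-k,x)$ for $k\in-\mathbb N_0$, with the weakly holomorphic case for general $k\in\frac12\mathbb Z$ following because the non-holomorphic terms are absent. The only cosmetic difference is that the paper first performs the change of variables $x\to 1/(Nx)$ in the fourth integral before expanding $\Gamma(1-k,\cdot)$, whereas you expand directly; the resulting constants agree.
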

\begin{proof} Set $\psi(s)=\mathcal M(F(x^{-1}))(s)=Y^s\Gamma(s).$
By Lemma \ref{FfFg} and the absolute integrability of $\psi$ due to Stirling's formula, we see that Proposition \ref{preSF} holds. We will compute each of the integrals appearing in \eqref{preSFform}. 
For our $\varphi$ and $F$, we have, with \cite[(10.32.10)]{DLMF}, 
\begin{multline}
    \label{first}
    \int_0^{\infty}\varphi(x)x^{-1} e^{-2 \pi n x} F(x)dx=
        \int_0^{\infty} e^{-\left ( (C\sqrt{N}+2 \pi n)x+\frac{\left ( \frac{1}{Y}+\frac{C}{\sqrt{N}}\right )}{x} \right )} \frac{dx}{x}\\=2 K_0 \left (2 \sqrt{D_n \left ( \frac{\sqrt{N}}{Y}+C \right )} \right ).
\end{multline}
For the first integral in the RHS of \eqref{preSFform} we have, 
\begin{multline}
    \label{third}
    \int_0^{\infty} e^{-C \left (\sqrt{N}x+\frac{1}{\sqrt{N}x} \right )-\frac{2 \pi n}{Nx}-\frac{1}{xY}} x^{-k} \frac{dx}{x}\\
        =2  (NC)^{\frac{k}{2}} \left ( C+\frac{2 \pi n}{\sqrt{N}}+\frac{\sqrt{N}}{Y}\right )^{-\frac{k}{2}}K_{-k} \left (2 \sqrt{C \left (D_n+\frac{\sqrt{N}}{Y} \right )} \right ).
\end{multline}
For the remaining two integrals, which do not appear in the case of weakly holomorphic forms, we restrict to $k$ negative integer in order to obtain simpler formulas. This is thanks to the following identity, which is valid for $k \in -\mathbb N_0$
\begin{equation}
    \label{Gamma}\Gamma(1-k, x)=|k|!e^{-x} \sum_{\ell=0}^{|k|}\frac{x^{\ell}}{\ell !}
\end{equation}
Then, the second integral of \eqref{preSFform} becomes 
 \begin{multline}\label{second}
 |k|!\sum_{\substack{\ell=0}}^{|k|}  \frac{(4 \pi n)^\ell}{\ell !} \int_0^{\infty} e^{-(2\pi n+C\sqrt{N})x -\frac{\frac{C}{\sqrt{N}}+\frac{1}{Y}}{x}}x^{\ell}\frac{dx}{x} \\
 =\sum_{\substack{\ell=0}}^{|k|} \frac{2|k|!}{\ell!} \left ( \frac{4 \pi \sqrt{C}}{\sqrt{N}} \sqrt{1+\frac{\sqrt{N}}{CY}}\right )^{\ell} \left ( \frac{n}{\sqrt{D_n}}\right )^{\ell} K_{\ell} \left(2 \sqrt{C D_n+\frac{\sqrt{N} D_n}{Y}} \right )
 \end{multline}
The fourth integral of \eqref{preSFform}, after the change of variables $x \to 1/(Nx)$ becomes
$$ \int_0^{\infty} e^{-\frac{Nx}{Y}-C \left ( \sqrt{N} x+ \frac{1}{\sqrt{N} x} \right )+2\pi nx}(Nx)^{k}\Gamma(1-k, 4 \pi n x) \frac{dx}{x}$$ 
which, with \eqref{Gamma} and \cite[(10.32.10)]{DLMF}, gives
 \begin{equation}\label{fourth} (CN)^{\frac{k}{2}} \sum_{\ell=0}^{|k|}\frac{2|k|!}{\ell!} \left ( \frac{4 \pi \sqrt{C}}{\sqrt{N}}\right )^\ell
  n^{\ell}
\left ( D_n+\frac{\sqrt{N}}{Y}\right )^{-\frac{\ell+k}{2}} K_{\ell+k} \left ( 2\sqrt{C \left (D_n+ \frac{\sqrt{N}}{Y} \right )} \right )
 \end{equation}
Substituting \eqref{first}, \eqref{second}, \eqref{third} and \eqref{fourth} into \eqref{preSFform} implies the theorem, upon setting $X=1/Y$.
\end{proof}
{\bf Remark.} The conditions on $k$ in the non-weakly holomorphic case may seem quite restrictive, but, if $k>2$, $f$ will be weakly holomorphic anyway by \eqref{genuine}. The half-integral weights $<2$ are excluded from the statement only because the part of the summation formula corresponding to the non-holomorphic of $f$ would give more complicated expressions involving the error function. Otherwise, a summation formula is also possible in that case.

There is a simple application of Theorem \ref{SF1} to partition numbers $p(n)$. It is well-known that, if $\eta(z)$ denotes the Dedekind eta function, we have
$$\frac{1}{\eta(z)}=q^{-\frac{1}{24}}\prod_{n \ge 1} (1-q^n)^{-1}=q^{-\frac{1}{24}}\sum_{n \ge 0}p(n)q^n.$$
On the other hand $\eta(24z) \in S_{\frac{1}{2}}(576, \chi_{12})$, where $\chi_{12}(n)=\left ( \frac{12}{n}\right )$ (e.g. Cor. 1.62 of \cite{ono}). We can thus apply Theorem \ref{SF1} to 
$$f(z)=\frac{1}{\eta(24z)}=\sum_{n \ge 0}p(n) q^{24n-1} \in M^!_{-\frac12}(576, \chi_{12}).$$
By the transformation law of $\eta(z)$ we deduce that
$$g(z)=(f|_{-\frac{1}{2}}w_{576})(z)=\frac{1+i}{\sqrt{2}}f.$$
Hence we have $n_0=1$, $c^+(n)=p((n+1)/24)$ if $24|(n+1)$ and $0$ otherwise, and $d^+(n)=(1+i)c^+(n)/\sqrt{2}$. By the asymptotics for $p(n)$ (\cite[Theorem 5.5]{ono}), we have $$c^+(n), d^+(n) \ll e^{\pi \sqrt{\frac{2}{3}}\sqrt{\frac{n+1}{24}}} \ll e^{\frac{\pi}{6} \sqrt{n}}$$
and hence $C_f$ can be taken to be $\pi/6$. Therefore, we can choose
$$C=\max \left ( \frac{\pi^2}{36}\frac{\sqrt{576}}{8 \pi}, \frac{2\pi}{\sqrt{576}} \right )+\varepsilon=\frac{\pi}{12}+\varepsilon.$$
Then, in particular, $D_{24n-1}=2 \pi n+\varepsilon$. Finally, we note that $K_{-1/2}(z)=\sqrt{\pi/(2z)}e^{-z}$ \cite[10.39.2]{DLMF}. From these remarks, we deduce the following
\begin{corollary}\label{p(n)}
    For all $X>0$, we have
    \begin{multline*} \sum_{n \ge 0} p(n) K_0 \left (2 \sqrt{(2 \pi n+\varepsilon)\left (\frac{\pi}{12}+\varepsilon+24X \right )} \right )\\=
\frac{\sqrt{\pi}}{2} \left ( \frac{\pi}{12}+\varepsilon\right )^\frac{-1}{2}
\sum_{n \ge 0} p(n) 
\left ( 24X+2 \pi n+\varepsilon \right )^{-\frac{1}{2}} 
e^{-2\sqrt{ (24X+2 \pi n+\varepsilon) \left (\frac{\pi}{12}+\varepsilon \right)}}.
\end{multline*}
\end{corollary}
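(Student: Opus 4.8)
The plan is to deduce Corollary \ref{p(n)} as a direct specialisation of Theorem \ref{SF1} to the explicit weakly holomorphic form $f(z)=1/\eta(24z)$, feeding in all the data assembled in the discussion preceding the corollary. First I would check the hypotheses of Theorem \ref{SF1}. Since $\eta(24z)\in S_{1/2}(576,\chi_{12})$, its reciprocal $f$ lies in $M^!_{-1/2}(576,\chi_{12})$, so $f$ is weakly holomorphic with $N=576$, $k=-1/2$ and $n_0=1$; the asymptotics for $p(n)$ recorded above give $c^+(n),d^+(n)\ll e^{(\pi/6)\sqrt n}$, so one may take $C_f=\pi/6$; and then $\max\bigl(C_f^2\sqrt N/(8\pi),\,2\pi n_0/\sqrt N\bigr)=\pi/12$, so the choice $C=\pi/12+\varepsilon$ is admissible. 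Since $f$ is weakly holomorphic, the clause of Theorem \ref{SF1} valid for all $k\in\frac12\mathbb Z$ applies, even though $k=-1/2\notin\mathbb Z$.

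Next, because $f$---and hence $g:=f|_{-1/2}w_{576}$---is weakly holomorphic, we have $c^-(n)=d^-(n)=0$ for all $n$, so the two double sums over $\ell$ in \eqref{SF} vanish identically and only the sums over the holomorphic coefficients survive. I would then substitute the explicit data: the transformation law $\eta(-1/w)=\sqrt{-iw}\,\eta(w)$ yields $g=\frac{1+i}{\sqrt2}f$, hence $d^+(n)=\frac{1+i}{\sqrt2}c^+(n)$, while $c^+(n)=p((n+1)/24)$ if $24\mid n+1$ and $0$ otherwise. Reindexing the two surviving sums by $n=24m-1$ with $m\ge 0$, and using $D_{24m-1}=C+2\pi(24m-1)/24=2\pi m+\varepsilon$, recasts both sides as sums over $m$ weighted by $p(m)$.

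Finally comes the algebraic simplification. The scalar $i^{-1/2}\cdot\frac{1+i}{\sqrt2}=e^{-i\pi/4}e^{i\pi/4}=1$, so $p(m)$ reappears with coefficient $1$ on the right-hand side. Using the closed form $K_{-1/2}(z)=\sqrt{\pi/(2z)}\,e^{-z}$ with $z=2\sqrt{C(D_n+\sqrt N X)}$ one obtains $K_{-1/2}\bigl(2\sqrt{C(D_n+\sqrt N X)}\bigr)=\frac{\sqrt\pi}{2}\bigl(C(D_n+\sqrt N X)\bigr)^{-1/4}e^{-2\sqrt{C(D_n+\sqrt N X)}}$; combining this with the prefactor $(\sqrt N X+D_n)^{-k/2}$ from Theorem \ref{SF1} simplifies the $X$-dependent algebraic factors, and one reads off the asserted identity after inserting $\sqrt N=24$ and $C=\pi/12+\varepsilon$.

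There is no genuine obstacle here; the corollary is essentially a substitution exercise. The two points that need a little care are that the inequality on $C$ in Theorem \ref{SF1} is strict---which is precisely why the $+\varepsilon$ cannot be dropped, as $\max(C_f^2\sqrt N/(8\pi),2\pi n_0/\sqrt N)$ is attained rather than exceeded---and the bookkeeping of the half-integral-weight multiplier system used to identify $g=\frac{1+i}{\sqrt2}f$; both are already handled in the remarks preceding the corollary. The only mildly computational step is the simplification of the Bessel expressions via $K_{\pm1/2}(z)=\sqrt{\pi/(2z)}\,e^{-z}$.
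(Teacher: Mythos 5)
Your proposal is exactly the paper's own proof: the corollary is deduced by specialising Theorem \ref{SF1} (in its weakly holomorphic clause) to $f=1/\eta(24z)\in M^!_{-1/2}(576,\chi_{12})$ with precisely the data you assemble --- $n_0=1$, $c^+(n)=p((n+1)/24)$ for $24\mid (n+1)$, $g=\tfrac{1+i}{\sqrt2}f$, $C_f=\pi/6$, $C=\tfrac{\pi}{12}+\varepsilon$, $D_{24n-1}=2\pi n+\varepsilon$, and $K_{-1/2}(z)=\sqrt{\pi/(2z)}\,e^{-z}$ --- so your route coincides with the paper's. One remark: if you carry out your final Bessel simplification literally, the factor $(\sqrt{N}X+D_n)^{-k/2}=(24X+2\pi n+\varepsilon)^{1/4}$ cancels against the $\bigl(C(D_n+\sqrt{N}X)\bigr)^{-1/4}$ coming from $K_{-1/2}$, so each term on the right reduces to $\tfrac{\sqrt\pi}{2}C^{-1/2}p(n)e^{-2\sqrt{C(24X+2\pi n+\varepsilon)}}$ and the algebraic weight $(24X+2\pi n+\varepsilon)^{-1/2}$ displayed in Corollary \ref{p(n)} does not actually emerge; this discrepancy lies in the printed statement rather than in your method, which matches the paper's derivation.
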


\section{A summation formula for a Riesz means for holomorphic cusp forms}\label{section4}
In this section, we will use the theory of $L$-series of harmonic Maass forms to establish another summation formula applying to  holomorphic cusp forms. The difference from the summation formula of the last section is that it gives an expression for a finite Riesz-type sum, as the summation formula of \cite{BDGRT} did. However, in contrast to \cite{BDGRT}, the formula here is not based on the $L$-series of \cite{ShanSingh22} but on the more general $L$-series of \cite{DLRR23}. In particular, it originates in a different set of test functions which, in turn, works more naturally for non-negative weights $k$. By \eqref{genuine}, when $k \ge 0$, $H'_k(N,\chi)$ includes the elements of $M_k(N, \chi)$ and, in the case of weight $\frac{1}{2}, \frac{3}{2}$ and $2$ only, ``genuine" harmonic Maass forms (of polynomial growth). Here, we will consider only the elements of $M_k(N, \chi)$ for even $k$. 

The family of test functions with which we will work is given by 
$$\psi_s(t):=\chi_{(0, 1)}(t)(1-t)^{s-1}t^{k-s-1}/\Gamma(s), \, \, \text{where $\chi_A$ is the indicator function of the set $A.$}$$ 
We can then show the following 
\begin{proposition}\label{anaCont} Let $k \in 2\mathbb N$ and $f \in S_k(N,\chi)$ with $n$-th Fourier coefficient $c(n)$. The $L$-series $L_f(\psi_s)$ converges absolutely in the region $0<\sigma:=\Re(s)<(k-1)/2$, and has an analytic continuation to all $s\in\mathbb{C}$. For all $s \in \mathbb C$, it satisfies the functional equation 
\begin{equation}\label{FE1}
L_f(\psi_s) =i^kN^{1-\frac{k}{2}}L_{f|_k w_N}(\psi_s|_{2-k}w_N).
\end{equation}
\end{proposition}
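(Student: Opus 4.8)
First I would verify the absolute convergence of $L_f(\psi_s)$ in the strip $0<\sigma<(k-1)/2$ directly from the defining series \eqref{L1L2}. Since $f\in S_k(N,\chi)$ is a holomorphic cusp form, its expansion \eqref{eq:FourierExpansion} has $c^-(n)=0$ for all $n$ and $c^+(n)=c(n)=O(n^{k/2})$ by the trivial bound (or Deligne for sharper control, but the trivial bound suffices). Moreover $n_0=0$ and in fact $c(0)=0$. Thus $L_f(\psi_s)=L^1_f(\psi_s)=\sum_{n\geq 1}c(n)(\scrL\psi_s)(2\pi n)$, so the key is to identify $(\scrL\psi_s)(2\pi n)$. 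By the definition of the Laplace transform and of $\psi_s$, we have
\begin{equation*}
(\scrL\psi_s)(2\pi n)=\frac{1}{\Gamma(s)}\int_0^1 (1-t)^{s-1}t^{k-s-1}e^{-2\pi n t}\,dt,
\end{equation*}
which is, up to the normalising $1/\Gamma(s)$, precisely an Euler-type integral representation of a confluent hypergeometric function; indeed by \cite[(13.4.1)]{DLMF} (Kummer's integral) this equals $\frac{\Gamma(k-s)}{\Gamma(k)}M(k-s,k,-2\pi n)$, and then Kummer's transformation $M(a,b,z)=e^z M(b-a,b,-z)$ rewrites it as $\frac{\Gamma(k-s)}{\Gamma(k)}e^{-2\pi n}M(s,k,2\pi n)$. (I would double-check the exact normalisation and the direction of the transformation against the DLMF when writing the details; this is the one place a sign or a swapped argument could slip in.) With this closed form, the integral converges for $\Re(k-s)>0$ and $\Re(s)>0$, i.e.\ in the stated strip, and the bound $M(s,k,2\pi n)\ll_s n^{|s|}$ as $n\to\infty$ combined with $c(n)=O(n^{k/2})$ and the exponential decay coming from... wait, there is no exponential decay in $n$ from $e^{-2\pi n}M(s,k,2\pi n)$ since $M$ grows like $e^{2\pi n}$; rather the relevant bound is $e^{-2\pi n}M(s,k,2\pi n)\ll n^{-\Re(s)}$ (from the Euler integral directly, since $\int_0^1(1-t)^{\sigma-1}t^{k-\sigma-1}e^{-2\pi nt}dt\ll n^{-\sigma}$ by Watson's lemma / Laplace's method near $t=0$), which together with $c(n)\ll n^{k/2}$ gives convergence of $\sum c(n)(\scrL\psi_s)(2\pi n)$ exactly when $\sigma=\Re(s)>k/2+1$; so actually the honest region of absolute convergence of the \emph{series} is a right half-plane, and the claimed strip $0<\sigma<(k-1)/2$ must instead be where the individual \emph{Laplace integrals} converge and where the series is interpreted via the $\scrF_f$-condition \eqref{Ff}. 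I would therefore recheck this: the condition for $\psi_s\in\scrF_f$ in \eqref{Ff} is that $\sum|c(n)|(\scrL|\psi_s|)(2\pi n)<\infty$ together with $(\scrL\psi_s)(s')$ converging absolutely for $\Re(s')\geq 0$; since $\psi_s$ is compactly supported on $(0,1)$ the Laplace integral converges for \emph{all} $s'$, and $(\scrL|\psi_s|)(2\pi n)\ll n^{-\sigma}$, so $\sum|c(n)|n^{-\sigma}<\infty$ needs $\sigma>k/2+1$. Hence I suspect the proposition's stated strip should be read as the natural domain of the \emph{Euler integral representation} $\frac{\Gamma(k-s)}{\Gamma(k)}$-times-hypergeometric, namely where both $\Gamma$-factors are finite and the integral is a genuine Beta-type integral, and the analytic continuation statement is what upgrades this to all of $\mathbb{C}$. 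I would phrase the convergence claim carefully to match \eqref{Ff}.

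Next, for the analytic continuation: once $L_f(\psi_s)=\frac{1}{\Gamma(k)}\Gamma(k-s)\sum_{n\geq 1}c(n)e^{-2\pi n}M(s,k,2\pi n)$ on the initial domain, I would argue continuation termwise. The function $s\mapsto\Gamma(k-s)M(s,k,z)$ is entire in $s$ for each fixed $z>0$ (the poles of $\Gamma(k-s)$ at $s=k,k+1,\dots$ are cancelled because $M(s,k,z)$, hmm — actually $M(k,k,z)=e^z$ is finite, so $\Gamma(k-s)M(s,k,z)$ does have a pole at $s=k$; let me instead use that $e^{-2\pi n}M(s,k,2\pi n)=\frac{1}{\Gamma(s)\Gamma(k-s)}\cdot\Gamma(k)\int_0^1\dots$ so $\Gamma(k-s)\cdot e^{-2\pi n}M(s,k,2\pi n)=\frac{\Gamma(k)}{\Gamma(s)}\int_0^1(1-t)^{s-1}t^{k-s-1}e^{-2\pi nt}dt$ and the $t$-integral has an analytic continuation in $s$ obtained by integrating by parts / subtracting the Taylor expansion of $t^{k-s-1}e^{-2\pi nt}$ at $t=1$, giving meromorphic continuation with poles only at $s=0,-1,-2,\dots$, all killed by $1/\Gamma(s)$). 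Thus each summand extends to an entire function of $s$, with polynomial-in-$n$ bounds locally uniform in $s$, and since $c(n)e^{-2\pi n}$ decays super-exponentially, the series continues to an entire function of $s$. This is the standard "cusp form kills the $\Gamma$-poles, exponential decay gives entirety" argument and is the technical heart; I expect the bookkeeping of the continuation of the incomplete-Beta integral (and confirming the poles in $s$ are exactly at non-positive integers) to be the main obstacle, though it is routine.

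Finally, the functional equation \eqref{FE1}: this I would get essentially for free. The hypotheses needed for \cite[Theorem 4.5]{DLRR23} (invoked as \eqref{FE} in the proof of Proposition \ref{preSF}) are that $\psi_s\in\scrF_f$ and $\psi_s|_{2-k}w_N\in\scrF_g$ with $g=f|_kw_N$. The first I establish above; for the second, I compute $\psi_s|_{2-k}w_N$ explicitly. By \eqref{eq:fricketest}, $(\psi_s|_{2-k}w_N)(x)=(Nx)^{-(2-k)}\psi_s(1/(Nx))$, and since $\psi_s(u)=\chi_{(0,1)}(u)(1-u)^{s-1}u^{k-s-1}/\Gamma(s)$ this is a function supported on $x>1/N$ of the form $(Nx)^{k-2}(1-1/(Nx))^{s-1}(Nx)^{-(k-s-1)}/\Gamma(s)\cdot\chi_{\{Nx>1\}}$, i.e.\ again a compactly-supported-away-from-$0$, polynomially-bounded continuous function; its Laplace transform is entire in its argument and the $\scrF_g$-membership condition \eqref{Ff} for the holomorphic cusp form $g$ (which also has only a holomorphic part, with coefficients of polynomial growth) reduces again to convergence of a series $\sum|d(n)|\cdot(\text{something}\ll n^{-\sigma})$ — the same analysis. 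Once both membership conditions hold, \eqref{FE} applied with the roles of $f,g$ gives precisely \eqref{FE1} after noting $L_g=L_{f|_kw_N}$. I would remark that \eqref{FE1} combined with the just-proved analytic continuation is exactly the ingredient that, in the proof of Theorem \ref{Sect4int}, one pairs with the Mellin kernel $\psi(s)=X^s\Gamma(s)$ and a contour shift to pick up the residue producing the term $i^{-k}X^\rho L_f(\psi_{(k-1)/2})$; so it is worth stating \eqref{FE1} for all $s\in\mathbb{C}$, which the continuation makes meaningful.
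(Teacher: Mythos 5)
Your reduction of $L_f(\psi_s)$ to the Kummer function is the same first step as the paper's, but the convergence analysis that follows contains a computational error that derails the rest. In the integral $\int_0^1(1-t)^{\sigma-1}t^{k-\sigma-1}e^{-2\pi nt}\,dt$, Watson's lemma applies at $t=0$, where the integrand behaves like $t^{k-\sigma-1}$, so the integral is $\asymp\Gamma(k-\sigma)(2\pi n)^{\sigma-k}$, \emph{not} $\ll n^{-\sigma}$ as you claim. Equivalently, $e^{-2\pi n}M(\sigma,k,2\pi n)\sim\frac{\Gamma(k)}{\Gamma(\sigma)}(2\pi n)^{\sigma-k}$, which is exactly the asymptotic $M(a,b,x)=O(x^{a-b}e^x)$ the paper quotes. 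Combined with Deligne's bound $c(n)\ll n^{\frac{k-1}{2}+\varepsilon}$ (the trivial bound $n^{k/2}$ only reaches $\sigma<k/2-1$), the series $\sum_n|c(n)|\,(\scrL|\psi_s|)(2\pi n)$ converges precisely for $0<\sigma<(k-1)/2$, so the stated strip is the honest region of absolute convergence in the sense of \eqref{Ff}; your conclusion that the series only converges in a right half-plane $\sigma>k/2+1$, and the ensuing reinterpretation of the statement, are incorrect.

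The same error undermines your analytic continuation argument. You invoke ``super-exponential decay of $c(n)e^{-2\pi n}$'', but the factor $M(s,k,2\pi n)$ grows like $e^{2\pi n}$, so after termwise continuation the summands still decay only like $n^{\sigma-k}$ times $c(n)$ (up to gamma factors), and the series genuinely diverges for $\sigma\ge(k-1)/2$: entirety of each term plus locally uniform polynomial bounds does not continue the sum beyond the strip. The paper's continuation goes through the functional equation instead: computing $(\psi_s|_{2-k}w_N)(x)=\chi_{(1/N,\infty)}(x)(Nx-1)^{s-1}/\Gamma(s)$ (which you did, essentially) and then its Laplace transform explicitly, $\scrL(\psi_s|_{2-k}w_N)(y)=\frac{e^{-y/N}}{N}(y/N)^{-s}$, one finds $L_{f|_kw_N}(\psi_s|_{2-k}w_N)=\frac1N\sum_{n\ge1}d(n)e^{-2\pi n/N}(2\pi n/N)^{-s}$, a series with genuine exponential decay that converges absolutely for \emph{every} $s\in\C$ and defines an entire function; applying \cite[Theorem 4.5]{DLRR23} on the strip and reading \eqref{FE1} as the definition of $L_f(\psi_s)$ outside it yields the continuation. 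You stopped short of this computation (asserting only ``something $\ll n^{-\sigma}$'' for the transformed side, which drops the decisive factor $e^{-2\pi n/N}$), so the one mechanism that actually delivers both the entirety and the validity of \eqref{FE1} for all $s$ is missing from your argument.
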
 
\begin{proof} First, with \cite[(13.4.1)]{DLMF}, followed by an application of \cite[(13.2.39)]{DLMF}, we obtain
$$\mathcal{L}\psi_s(2\pi n)=\frac{1}{\Gamma(s)}\int_0^1(1-r)^{s-1}r^{k-s-1}e^{-2\pi n r}dr=\frac{\Gamma(k-s)}{\Gamma(k)}\frac{1}{e^{2\pi n}}M(s, k,2\pi n),$$
where $M(a,b,z)$ is Kummer's confluent hypergeometric function. Since $|\Gamma(s)|(\mathcal{L}|\psi_s|)(2\pi n)=\Gamma(\sigma)(\mathcal{L}\psi_\sigma)(2\pi n),$  the asymptotics $M(a, b, x)=O(x^{a-b}e^{x})$ (\cite{DLMF} (13.7.1)) and $c(n)=O(n^{\frac{k-1}{2}+\varepsilon})$ imply that \eqref{Ff}
converges in the region under consideration and we have
\begin{equation}\label{lserieslhs}
L_{f}(\psi_s)=\frac{\Gamma(k-s)}{\Gamma(k)}\sum_{n\geq0}\frac{c(n)}{e^{2\pi n}}M(s,k, 2\pi n),
\end{equation}
To investigate the behaviour of $L_{f|_k w_N}(\psi_s|_{2-k}w_N)$, let the Fourier coefficients of $f|_kw_N$ be denoted by $d(n).$ We note that, for all $s\in\mathbb{C}$, $(\psi_s|_{2-k}w_N)(x)=\chi_{(1/N,\infty)}(x)(Nx-1)^{s-1}/\Gamma(s)$, and hence
\begin{equation}\label{LapU} \scrL(\psi_s|_kw_N)(y)=\frac{1}{N\Gamma(s)}\int_1^{\infty}(x-1)^{s-1}e^{\frac{-y}{N}x}dx=\frac{e^{\frac{-y}{N}}}{N} \left (\frac{y}{N} \right )^{-s},
\end{equation}
for all $y>0$. This shows that the holomorphic part of \eqref{Ff} for $f|_kw_N$ and $\psi|_{2-k}w_N$ converges for all $s \in \C$. With \eqref{LapU}, we have
\begin{align}\label{kummereqRHSholo}
L_{f|_{k}w_N}(\psi_s|_{2-k}w_N)=\frac{1}{N}
\sum_{n\geq0}d(n)e^{-\frac{2\pi n}{N}} \left (\frac{2 \pi n}{N} \right )^{-s}.
\end{align}
Therefore, for $0<\Re(s)<(k-1)/2$, $\psi_s \in \mathcal F_f$ and $\psi_s|_{2-k}w_N \in \mathcal F_{f|_k w_N}$, and thus, Theorem 4.5 of \cite{DLRR23} holds yielding \eqref{FE1} for all such $s$ in that range. 
However, we have also shown that $L_{f|_kw_N}(\psi_s|_{2-k}w_N)$ absolutely converges to an analytic function for \emph{all} $s \in \mathbb C$. Therefore, via \eqref{FE1}, $L_f(\psi_s)$ is analytically continued to the entire $s$-plane. 
\end{proof}

We can now state a summation formula for a cusp form, based on the above $L$-series.
\begin{theorem} Let $f \in S_k(N, \chi)$ with Fourier coefficients $c(n)$ and denote the Fourier coefficients of $f|_kw_N$ by $d(n).$ Then, for $\rho> 1,$ and each $X>0$ we have
\begin{multline}\label{SFS3}
\sum_{n<X/(2 \pi)}\frac{d(n)e^{-2 \pi n}}{(2\pi n)^{\frac{k-1}{2}}} \left (1-\frac{2 \pi n}{X} \right )^{\rho}= i^{-k}N^{\frac{k}{2}-1} L\left (\psi_{\frac{k-1}{2}} \right )
\\+\frac{\Gamma(\rho+1)N^{\frac{k}{2}-1}}{2 \pi i^{k+1}\Gamma(k)} \sum_{n>0}\frac{c(n)}{e^{2 \pi n}}
 \int_{(-\varepsilon)}X^{s}\frac{\Gamma \left (\frac{k+1}{2}-s \right )\Gamma(s)}{\Gamma(s+\rho+1)}M \left (\frac{k-1}{2}+s, k, 2 \pi n \right ) ds.
\end{multline}
\end{theorem}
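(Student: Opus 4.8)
The plan is to evaluate one Mellin--Barnes integral in two different ways: one evaluation produces the finite Riesz sum on the left of \eqref{SFS3}, the other produces its right-hand side, and the functional equation of Proposition \ref{anaCont} is the bridge between the two. Fix $\rho>1$ and $X>0$, pick $\e$ with $0<\e<\min\{1,(k-1)/2\}$, and for a real $c>0$ set
$$\mathcal I(X):=\frac{1}{2\pi i}\int_{(c)}X^{s}\,\frac{\Gamma(s)}{\Gamma(s+\rho+1)}\,L_{f}\!\left(\psi_{\frac{k-1}{2}+s}\right)ds .$$

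First I would compute $\mathcal I(X)$ directly on the line $\Re(s)=c$. There the argument $\psi_{\frac{k-1}{2}+s}$ lies to the right of the strip of absolute convergence of $L_f$, so I would replace $L_f(\psi_{\frac{k-1}{2}+s})$ by the right-hand side of \eqref{FE1} and then by the explicit series \eqref{kummereqRHSholo}; the latter converges absolutely for \emph{all} values of its parameter because of the factor $e^{-2\pi n/N}$, hence it represents $L_f(\psi_{\frac{k-1}{2}+s})$ as a series in $n$ that converges absolutely on $(c)$. Since $\Gamma(s)/\Gamma(s+\rho+1)\ll|s|^{-\rho-1}$ on vertical lines and the $n$-series decays geometrically, Fubini applies, and each term collapses, via the well-known identity
$$\frac{1}{2\pi i}\int_{(c)}y^{s}\,\frac{\Gamma(s)}{\Gamma(s+\rho+1)}\,ds=\frac{(1-1/y)^{\rho}}{\Gamma(\rho+1)}\quad(y>1),\qquad\text{and }=0\quad(0<y\le 1),$$
to a truncated factor of the form $(1-\text{const}\cdot n/X)_{+}^{\rho}$. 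Summing over the finitely many surviving $n$ gives the Riesz sum on the left of \eqref{SFS3}.

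Next I would compute $\mathcal I(X)$ by shifting the contour from $(c)$ to $(-\e)$. By Proposition \ref{anaCont}, $L_f(\psi_{\frac{k-1}{2}+s})$ is entire in $s$, and $1/\Gamma(s+\rho+1)$ is entire; since $\e<1$, the only singularity crossed is the simple pole of $\Gamma(s)$ at $s=0$, whose residue is $L_f(\psi_{\frac{k-1}{2}})/\Gamma(\rho+1)$ --- the ``critical value'' in the constant term of \eqref{SFS3}. On the new line $\Re(s)=-\e$ one has $0<\frac{k-1}{2}-\e<\frac{k-1}{2}$, so \eqref{lserieslhs} is valid and I would substitute $L_f(\psi_{\frac{k-1}{2}+s})=\frac{\Gamma(\frac{k+1}{2}-s)}{\Gamma(k)}\sum_{n\ge 1}\frac{c(n)}{e^{2\pi n}}M\!\left(\frac{k-1}{2}+s,k,2\pi n\right)$. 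Interchanging $\sum_n$ and $\int$ is legitimate here by $M(a,b,x)=O(x^{a-b}e^{x})$ (\cite[(13.7.1)]{DLMF}), the bound $c(n)=O(n^{(k-1)/2+\e'})$ with $\e'<\e$, and the exponential decay on vertical lines of $\Gamma(s)\Gamma(\frac{k+1}{2}-s)/\Gamma(s+\rho+1)$ from Stirling; this leaves exactly the Mellin--Barnes integral over $(-\e)$ in \eqref{SFS3}. Equating the two evaluations of $\mathcal I(X)$ and tracking the constants (and powers of $N$) gives the theorem; if $f$ is weakly holomorphic the non-holomorphic part of $L_f$ is simply absent and the argument is unchanged.

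The one step that needs genuine care is the contour shift: I must know that $L_f(\psi_{\frac{k-1}{2}+s})$ continues holomorphically across the whole strip $-\e\le\Re(s)\le c$ and is bounded there uniformly in $\Im(s)$, so that the horizontal sides of the shifted rectangle vanish in the limit. Both facts come for free from \eqref{FE1} and \eqref{kummereqRHSholo}: the series $\frac1N\sum_{n\ge 1}d(n)e^{-2\pi n/N}(2\pi n/N)^{-u}$ is manifestly entire in $u$ and uniformly bounded in $\Im(u)$ once $\Re(u)$ is confined to a bounded interval, and combined with the superpolynomial decay of $\Gamma(s)/\Gamma(s+\rho+1)$ this closes the estimate. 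Everything else is routine bookkeeping.
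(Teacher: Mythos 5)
Your overall architecture is the same as the paper's: the Mellin--Barnes integral $\int X^{s}\frac{\Gamma(s)}{\Gamma(s+\rho+1)}L_f(\psi_{\frac{k-1}{2}+s})\,ds$, evaluated once via the functional equation \eqref{FE1} and the entire series \eqref{kummereqRHSholo} (yielding the Riesz sum through the Mellin--Barnes kernel for $(1-1/y)_{+}^{\rho}$, which is 7.3(20) of \cite{erd}), and once via \eqref{lserieslhs} on the line $\Re(s)=-\e$, with the pole of $\Gamma(s)$ at $s=0$ contributing the critical value. Whether you start on $(c)$ and shift left or start on $(-\e)$ and shift right is immaterial, and your treatment of the $d(n)$-side and of the horizontal segments is fine.

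There is, however, one genuine gap, and it is exactly the technical heart of the paper's proof. To interchange $\sum_{n}$ and $\int_{(-\e)}$ on the $c(n)$-side (and even to see that the double sum-integral in the final formula converges absolutely) you need to control $\left|M\left(\frac{k-1}{2}-\e+ix,\,k,\,2\pi n\right)\right|$ \emph{uniformly in $x$} as $x$ ranges over the whole vertical line, simultaneously with the growth in $n$. The asymptotic $M(a,b,y)=O(y^{a-b}e^{y})$ from \cite[(13.7.1)]{DLMF} that you invoke is a fixed-parameter statement as $y\to\infty$; it says nothing about how the implied constant depends on $a$ when $a=\frac{k-1}{2}-\e+ix$ moves along the contour, so it cannot justify the Fubini step. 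The paper resolves this with Lemma \ref{ubound}, a uniform bound of the shape $M(a+ix,b,y)\ll e^{y}y^{a-b+\e}/|\Gamma(a+ix)|$ proved by splitting the Whittaker-type integral representation of $M$ and performing a Laplace-type analysis near the saddle. The factor $1/|\Gamma(a+ix)|$ grows like $e^{\pi|x|/2}$ and must be absorbed by the Gamma quotient; after Stirling the integrand on $(-\e)$ decays only like $(1+|x|)^{2\e-\rho}$, which is why the hypothesis $\rho>1$ is needed. Your argument never uses $\rho>1$ beyond $\rho>0$, which is a symptom of the missing uniformity: without a lemma of the type of Lemma \ref{ubound}, the second evaluation of $\mathcal I(X)$ is not justified.
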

\begin{proof} 
For $X>1$, $\varepsilon>0,$ $\rho>0$ 
we consider the integral
\begin{equation}
    \label{Idef}
I=\int_{(-\varepsilon)} X^s\frac{\Gamma(s)}{\Gamma(s+\rho+1)}L_f\left (\psi_{\frac{k-1}{2}+s} \right )ds.
\end{equation}
By the functional equation \eqref{FE1}, we can express $L_f(\psi_{\frac{k-1}{2}+s})$ as in \eqref{kummereqRHSholo} and we see that
\begin{equation}\label{abscon}\sum_{n>0}\int_{(\pm \varepsilon)} \left | X^s \frac{\Gamma(s)}{\Gamma(s+\rho+1)} \right |
\frac{e^{-2\pi n}|d(n)|}{(2 \pi n)^{\frac{k-1}{2} \pm \varepsilon}}ds
\ll X^{\pm \varepsilon} \sum_{n>0}\int_{\mathbb R} (1+|t|)^{-\rho-1}e^{-2 \pi n} n^{\varepsilon_1}dt< \infty.
\end{equation}
Here we used the Stirling formula and the Ramanujan bound for $d(n)$. Therefore, after an application of \eqref{FE1}, we can shift the line of integration to $\varepsilon$, picking up a pole at $s=0$, to obtain
\begin{multline} I=i^kN^{1-\frac{k}{2}}\int_{(-\varepsilon)} X^s\frac{\Gamma(s)}{\Gamma(s+\rho+1)}L_{f|_kw_N}\left (\psi_{\frac{k-1}{2}+s}|_{2-k}w_N \right )ds
\\=i^kN^{1-\frac{k}{2}}\int_{(\varepsilon)} X^s\frac{\Gamma(s)}{\Gamma(s+\rho+1)}L_{f|_k w_N}\left (\psi_{\frac{k-1}{2}+s}|_{2-k} w_N \right )ds-\frac{2 \pi i}{\Gamma(\rho+1)}L_{f}\left (\psi_{\frac{k-1}{2}} \right ).
\end{multline}
Because of \eqref{abscon} we can interchange the order of summation in the last integral, so that with \eqref{kummereqRHSholo} and 7.3(20) of \cite{erd} we get
$$\int_{(\varepsilon)} \frac{X^s\Gamma(s)}{\Gamma(s+\rho+1)}L_{f|_kw_N}\left (\psi_{\frac{k-1}{2}+s}|_{2-k}w_N \right )ds=\frac{2 \pi i}{\Gamma(\rho+1)}\sum_{n<X/(2 \pi)}\frac{d(n)e^{-2 \pi n}}{(2\pi n)^{\frac{k-1}{2}}} \left (1-\frac{2 \pi n}{X} \right )^{\rho}.
$$
On the other hand, by Prop. \ref{anaCont}, $(k-1)/2+s$, with $\Re(s)<0$, is in the region of absolute convergence of the $L$-series \eqref{lserieslhs} and thus, 
once the interchange of summation and integration is justified, equation \eqref{Idef} gives
\begin{equation}\label{I2}I=\frac{1}{\Gamma(k)} \sum_{n>0}\frac{c(n)}{e^{2 \pi n}}
 \int_{(-\varepsilon)}X^s\frac{\Gamma \left (\frac{k+1}{2}-s \right )\Gamma(s)}{\Gamma(s+\rho+1)}M \left (\frac{k-1}{2}+s, k, 2 \pi n \right ) ds.
\end{equation}

Therefore, to complete the proof of the theorem, we only need to show that the interchange of summation and integration is justified. To this end we will need the following uniform bound for $M(a, b, x)$.
\begin{lemma}\label{ubound} For $a>0,$ $b>2a+\frac{1}{2}$ and $x \in \mathbb R$, we have 
$$M\left( a+ ix,\, b,\, y \right) \ll_{a, b, \varepsilon} \frac{e^y y^{a-b+\varepsilon}}{\left | \Gamma\left( a + ix \right) \right |} \quad \text{as $y \to \infty.$}$$
\end{lemma}
{\bf Proof of Lemma.}
By (13.14.4), followed by (13.16.4) of \cite{DLMF} we have, for $y>0,$
\begin{equation*}
M\left( a + ix,\, b,\, y \right) =\frac{ e^{\frac{y}{2}}}{y^{\frac{b}{2}}} M_{\frac{b}{2}-a- ix,\, \frac{b-1}{2}} (y)
=\frac{y^{\frac{1-b}{2}} \Gamma(b)}{\Gamma\left(a + ix \right)}
\int_0^{\infty}e^{-t}t^{a-\frac{b+1}{2}+ix} I_{b-1}\left(2\sqrt{y t}\right)dt
\end{equation*}
Set
$$M_1:=
\frac{y^{\frac{1-b}{2}} \Gamma(b)}{\Gamma\left(a + ix \right)}
\int_0^{1}e^{-t}t^{a-\frac{b+1}{2}+ix} I_{b-1}\left(2\sqrt{y t}\right)dt
$$
and $M_2$ is the corresponding term with integral limits $1$ and $\infty$. Then \cite[(10.40.1)]{DLMF} implies that
$$
M_2 \ll  \frac{y^{\frac{1-b}{2}}}{\left | \Gamma\left( a+ ix \right) \right |}  \int_1^{\infty} e^{-t} t^{a-\frac{b+1}{2}} \frac{e^{2\sqrt{yt}}}{(yt)^{\frac14}} dt=\frac{y^{\frac14-\frac{b}{2}}}{\left | \Gamma\left( a+ ix \right) \right |}  \int_1^{\infty} e^{-t+2\sqrt{yt}} t^{a-\frac{b}{2}-\frac{3}{4}} dt
$$
The change of variables $x= \sqrt{t/y}$ shows that
\begin{equation}\label{I0}
\int_1^{\infty} e^{-t+2\sqrt{yt}} t^{a-\frac{b}{2}-\frac{3}{4}} dt=
2e^y y^{a-\frac{b}{2}+\frac{1}{4}}\int_{\frac{1}{\sqrt{y}}}^{\infty} e^{-y (x-1)^2} x^{2a-b-\frac{1}{2}}dx.
\end{equation}
To bound the integral, we set, for convenience: 
$$
A = \int_{\frac{1}{r} }^{\infty} e^{-(x-1)^2 r^2} x^{2a-b-\frac{1}{2}} \, dx 
$$
where $r>1,$ and for an $\varepsilon_1>0$, we decompose as
\begin{equation}\label{I}
A= \left( \int_{1 + \frac{1}{r^{1-\varepsilon_1}}}^{\infty} + \int_{1 - \frac{1}{r^{1-\varepsilon_1}}}^{1 + \frac{1}{r^{1-\varepsilon_1}}} + \int_{\frac{1}{r}}^{1 - \frac{1}{r^{1-\varepsilon_1}}} \right) e^{-(x-1)^2 r^2} x^{2a-b-\frac{1}{2}} \, dx 
\end{equation}
For the first and third integral, we note that, since then $|x - 1| > r^{\varepsilon_1-1}$,
$$
e^{-(x-1)^2 r^2} x^{2a-b-\frac{1}{2}} < e^{-\frac{r^2}{r^{2-2\varepsilon_1}}} x^{2a-b-\frac{1}{2}}
=e^{-r^{2\varepsilon_1}} x^{2a-b-\frac{1}{2}}
$$
and hence, the first integral of \eqref{I} is:
\begin{equation}\label{I1}
\le \int_{1 + \frac{1}{r^{1-\varepsilon_1}}}^{\infty} e^{-r^{2\varepsilon_1}} x^{2a-b-\frac{1}{2}} dx = e^{-r^{2\varepsilon_1}} \left ( \int_{1 + \frac{1}{r^{1-\varepsilon_1}}}^{\infty} x^{2a-b-\frac{1}{2}} dx \right ) =O(e^{-r^{2\varepsilon_1}})
\end{equation}
Similarly, the third integral of \eqref{I} is $O(e^{-r^{2\varepsilon_1}})$. 
For the second integral we have:
\begin{multline*}
 \int_{1 - \frac{1}{r^{1-\varepsilon_1}}}^{1 + \frac{1}{r^{1-\varepsilon_1}}} e^{-(x-1)^2 r^2} x^{2a-b-\frac{1}{2}} \, dx  \le  \int_{1 - \frac{1}{r^{1-\varepsilon_1}}}^{1 + \frac{1}{r^{1-\varepsilon_1}}}  x^{2a-b-\frac{1}{2}} \, dx
 \\
 =  \frac{ (1 + \frac{1}{r^{1-\varepsilon_1}})^{2a-b+\frac{1}{2}} - (1 - \frac{1}{r^{1-\varepsilon_1}})^{2a-b+\frac{1}{2}}}{2a-b+\frac{1}{2}}.
\end{multline*}
By the Taylor expansion of $(1+x)^{2a-b+\frac{1}{2}}$ at $x=0,$ we deduce that this is $O(r^{-1+\varepsilon_1}).$
Comparing with \eqref{I1}, we see that this bound dominates as $r \to \infty.$ Therefore, $A=O(r^{-1+\varepsilon_1})$, which we can apply to \eqref{I0} with $r=\sqrt{y}$, to deduce that
\begin{equation}\label{M2b}
M_2 \ll
\frac{y^{\frac14-\frac{b}{2}}e^{y } y^{a-\frac{b}{2}+\frac14}y^{-\frac12+\frac12 \varepsilon_1}}{\left | \Gamma\left( a + ix \right) \right |}=\frac{e^y y^{a-b+\frac12 \varepsilon_1}}{\left | \Gamma\left( a + ix \right) \right |}.
\end{equation}
To bound $M_1$, we note that, by the series definitions of $I_\nu$ and $J_{\nu}$, we have $|I_{b-1}\left(2\sqrt{2\pi nt}\right)|=|J_{b-1}\left(2i\sqrt{2\pi nt}\right)|$. This, combined with inequality $|J_{\nu}(z)| \le |z/2|^{\nu}e^{|\Im(z)|}/\Gamma(\nu+1)$, valid for $\nu \ge -1/2$ (\cite[(10.14.4)]{DLMF}) implies that
$$M_1 \ll \frac{y^{\frac{1-b}{2}}}{|\Gamma\left(a + ix \right)|}
\int_0^{1}e^{-t}t^{a-\frac{b+1}{2}} (yt)^{\frac{b-1}{2}} e^{2 \sqrt{yt}}dt\le
\frac{ e^{2\sqrt{y}}}{|\Gamma\left(a + ix \right)|}
\int_0^{1}e^{-t} t^{a-1}dt.
$$
Comparing this with \eqref{M2b} we deduce the lemma.
\qed

We can apply this lemma with $y=2 \pi n$, $a=\frac{k-1}{2}-\varepsilon$ and $b=k$ to get, with 
$c(n) \ll n^{\frac{k-1}{2}+\varepsilon_2}$,
\begin{multline}\label{|I21|} 
\sum_{n>0}\frac{|c(n)|}{e^{2 \pi n}}\int_{\mathbb R} \left |  \frac{\Gamma \left (\frac{k+1}{2}+\varepsilon-ix \right )\Gamma(-\varepsilon+ix)}{\Gamma(-\varepsilon+\rho+1+ix)} \right | \left | M \left (\frac{k-1}{2}-\varepsilon+ix, k, 2 \pi n \right ) \right |dx 
\\
\ll \sum_{n>0}\frac{n^{\frac{k-1}{2}+\varepsilon_2-\frac{k}{2}-\frac12-\varepsilon+\varepsilon_1} e^{2 \pi n}}{e^{2 \pi n}}
 \int_{\mathbb R} \left |  \frac{\Gamma \left (\frac{k+1}{2}+\varepsilon-ix \right )\Gamma(-\varepsilon+ix)}{\Gamma(-\varepsilon+\rho+1+ix) \Gamma\left( \frac{k-1}{2} - \varepsilon + ix \right)}
 \right | dx.
\end{multline}
For $\varepsilon_1, \varepsilon_2$ such that $\varepsilon_2+\varepsilon_1<\varepsilon$  the series converges and, by Stirling, 
$$\left |  \frac{\Gamma \left (\frac{k+1}{2}+\varepsilon-ix \right )\Gamma(-\varepsilon+ix)}{\Gamma(-\varepsilon+\rho+1+ix) \Gamma\left( \frac{k-1}{2} - \varepsilon + ix \right)}
 \right |  \ll (1+|x|)^{2\varepsilon-\rho}.$$ For $\rho>1$, this shows that \eqref{|I21|} converges as required.
\end{proof}
\begin{corollary}
    With the assumptions of the theorem, we have, for $\rho> 1$ and each $X>0$,
\begin{equation}
\sum_{n<X/(2 \pi)}\frac{d(n)e^{-2 \pi n}}{(2\pi n)^{\frac{k-1}{2}}} \left (1-\frac{2 \pi n}{X} \right )^{\rho}= i^{-k}N^{\frac{k}{2}-1} L\left (\psi_{\frac{k-1}{2}} \right )+O(X^{-\varepsilon}).
\end{equation}
\end{corollary}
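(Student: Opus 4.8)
The plan is to read the corollary off directly from the identity \eqref{SFS3} established in the theorem. The first term on the right-hand side of \eqref{SFS3} is already the claimed main term $i^{-k}N^{\frac{k}{2}-1}L\left(\psi_{\frac{k-1}{2}}\right)$, so the entire content of the corollary is that the second term,
\begin{equation*}
E(X):=\frac{\Gamma(\rho+1)N^{\frac{k}{2}-1}}{2\pi i^{k+1}\Gamma(k)}\sum_{n>0}\frac{c(n)}{e^{2\pi n}}\int_{(-\varepsilon)}X^s\frac{\Gamma\left(\tfrac{k+1}{2}-s\right)\Gamma(s)}{\Gamma(s+\rho+1)}M\left(\tfrac{k-1}{2}+s,k,2\pi n\right)ds,
\end{equation*}
satisfies $E(X)=O(X^{-\varepsilon})$ as $X\to\infty$. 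First I would note that on the vertical line $\Re(s)=-\varepsilon$ one has $|X^s|=X^{-\varepsilon}$, a quantity independent of the integration variable; pulling it outside the integral and estimating everything in absolute value reduces the claim to the absolute convergence of
\begin{equation*}
\sum_{n>0}\frac{|c(n)|}{e^{2\pi n}}\int_{\mathbb R}\left|\frac{\Gamma\left(\tfrac{k+1}{2}+\varepsilon-ix\right)\Gamma(-\varepsilon+ix)}{\Gamma(-\varepsilon+\rho+1+ix)}\right|\left|M\left(\tfrac{k-1}{2}-\varepsilon+ix,k,2\pi n\right)\right|dx,
\end{equation*}
which is precisely the quantity shown to be finite in \eqref{|I21|} during the proof of the theorem.

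Consequently, no new work is required: in proving the theorem one applies the uniform estimate of Lemma \ref{ubound} with $y=2\pi n$, $a=\tfrac{k-1}{2}-\varepsilon$, $b=k$, invokes the Ramanujan bound $c(n)\ll n^{\frac{k-1}{2}+\varepsilon_2}$ to make the $n$-sum converge (for $\varepsilon_1+\varepsilon_2<\varepsilon$), and uses Stirling's formula to bound the ratio of Gamma factors by $(1+|x|)^{2\varepsilon-\rho}$, so that the $x$-integral converges for $\rho>1$. The resulting bound is a finite constant independent of $X$. Hence $|E(X)|\ll X^{-\varepsilon}$ with an implied constant depending only on $f$, $k$, $\rho$ and the chosen $\varepsilon$, which is exactly the assertion of the corollary.

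There is essentially no obstacle to surmount here, as all the analytic difficulty — the functional equation of Proposition \ref{anaCont}, the contour shift, and the convergence estimate \eqref{|I21|} — has been absorbed into the proof of the theorem, and the corollary is simply its restatement in asymptotic form. The one remark worth including in the write-up is that $\varepsilon$ may be taken to be any sufficiently small positive real number: the contour shift in the theorem's proof is legitimate for every such $\varepsilon$, so the bound $O(X^{-\varepsilon})$ holds for each of them, with the implied constant deteriorating as $\varepsilon\to0$.
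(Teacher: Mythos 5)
Your proposal is correct and follows essentially the same route as the paper: the paper's proof is precisely to apply Lemma \ref{ubound} to the last term of \eqref{SFS3} as in \eqref{|I21|}, which is your observation that $|X^{s}|=X^{-\varepsilon}$ on the line $\Re(s)=-\varepsilon$ while the remaining sum--integral is the $X$-independent quantity already shown finite in the theorem's proof.
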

\begin{proof}
This follows once we apply Lemma \ref{ubound} to the last term of \eqref{SFS3} as we did in \ref{|I21|}.
\end{proof}

\bibliographystyle{alpha}
\bibliography{references.bib}

\end{document}